\newcommand{\1}{\mathds{1}}
\newcommand{\n}{\mathfrak n}
\newcommand{\q}{\mathfrak q}
\renewcommand{\epsilon}{\varepsilon}
\renewcommand{\phi}{\varphi}
\renewcommand{\AA}{\mathfrak A}
\newcommand{\HH}{\mathfrak H}
\newcommand{\E}{\mathcal E}
\renewcommand{\H}{\mathcal H}
\newcommand{\J}{\mathcal J}
\newcommand{\U}{\mathcal U}
\newcommand{\IC}{\mathbb C}
\newcommand{\IN}{\mathbb N}
\newcommand{\IR}{\mathbb R}
\newcommand{\IZ}{\mathbb Z}
\newcommand{\dom}{\operatorname{dom}}
\newcommand{\id}{\mathrm{id}}
\newcommand{\op}{\mathrm{op}}
\newcommand{\abs}[1]{\lvert#1\rvert}
\newcommand{\norm}[1]{\lVert#1\rVert}
\theoremstyle{plain}
\newtheorem{proposition}{Proposition}[section]
\newtheorem{lemma}[proposition]{Lemma}
\newtheorem{theorem}[proposition]{Theorem}
\newtheorem*{theorem*}{Theorem}
\theoremstyle{definition}
\theoremstyle{remark}
\newtheorem{remark}[proposition]{Remark}
\newtheorem{example}[proposition]{Example}
\title[Dirichlet Forms and Derivations]{Modular Completely Dirichlet forms as Squares of Derivations}
\author{Melchior Wirth}
\address{Institute of Science and Technology Austria (ISTA), Am Campus 1, 3400 Klosterneuburg, Austria}
\email{melchior.wirth@ist.ac.at}
\begin{document}

\begin{abstract}
We prove that certain closable derivations on the GNS Hilbert space associated with a non-tracial weight on a von Neumann algebra give rise to GNS-symmetric semigroups of contractive completely positive maps on the von Neumann algebra.
\end{abstract}

\maketitle

\section*{Introduction}

The interplay between derivations and symmetric semigroups of unital (or contractive) completely positive maps has proven fruitful for applications in quantum information theory \cite{Bar17,CM17,GR22}, operator algebras \cite{Pet09a,Pet09b,DI16,Cas21} and beyond. Using the framework of completely Dirichlet forms, this connection is particularly well-understood in the case of tracially symmetric semigroups after the seminal work of Cipriani and Sauvageot \cite{Sau89,Sau90,CS03}.

In many situations however one encounters non-tracial reference states or weights: In quantum statistical mechanics, the reference state is typically a Gibbs state, which is not a trace at finite temperature; in quantum probability in the study of Lévy processes on compact quantum groups, the natural reference state is the Haar state, which is only a trace for the class of compact quantum groups of Kac type; and in the structure theory of von Neumann algebras, one is faced with non-tracial states when the von Neumann algebra has a non-trivial type III summand.

In the non-tracial setting, the connection between derivations and symmetric semigroups of completely positive maps is much less understood. Recently, it was shown by the author that every GNS-symmetric semigroup of unital completely positive maps gives rise to a canonical derivation via its associated Dirichlet form \cite{Wir22b}.  This result was (partially) extended to KMS-symmetric semigroups by Vernooij and the author \cite{VW23}.

There has also been work in the opposite direction -- starting with a derivation to construct a completely Dirichlet form \cite{Cip97,Par00,BK02,CZ21,LM22,Wir22b}. However, these results all rely on additional structural assumptions on the derivation, usually some form of (approximate) innerness. This means that natural examples like derivations arising from cocycles on non-unimodular groups or Voiculescu's derivation in non-tracial free probability could not be treated in this framework.

In this article, we prove in a general context that closable derivations give rise to GNS-symmetric semigroups of completely bounded maps. More precisely, our main result is the following.

\begin{theorem*}
Let $\AA$ be a Tomita algebra, $\H$ a normal Tomita bimodule over $\AA$ and $\delta\colon \AA\to\H$ a closable symmetric derivation. Let $\mathcal E$ be the closure of the quadratic form $\E_0$ given by $\dom(\E_0)=\dom(\delta)$ and $\E_0(a)=\norm{\delta(a)}_\H^2$. Then the strongly continuous semigroup associated with $\E$ is the GNS implementation of a GNS-symmetric semigroup of contractive completely positive maps on the left von Neumann algebra generated by $\AA$.
\end{theorem*}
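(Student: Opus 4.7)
The strategy is to verify that the closed quadratic form $\E$ is a modular completely Dirichlet form in the sense required for an abstract representation theorem that identifies such forms with GNS implementations of GNS-symmetric semigroups of contractive completely positive maps. Once $\E$ is shown to belong to this class, the theorem follows directly from this correspondence applied to the semigroup generated by the self-adjoint operator $A$ with $\E(a) = \langle a, A a\rangle$ for $a\in\dom(A^{1/2})$.

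Three properties must be established. First, \emph{modular invariance}: the Tomita bimodule structure on $\H$ carries a one-parameter family of isometries $U^\H_t$ compatible with both actions of $\AA$ and with the Tomita structure of $\AA$ itself; the assumption that $\delta$ is a symmetric derivation should be exactly what is needed for $\delta\circ U_t = U^\H_t\circ\delta$ on $\AA$, and by closability this intertwining passes to $\bar\delta$, whence $\E$ is $U_t$-invariant. Second, \emph{right-invariance of the semigroup}: the derivation identity $\delta(ab)=\delta(a)b + a\delta(b)$, together with the right-isometric action of $\AA$ on $\H$, implies that the form $\E$ is compatible with the right action. This yields that $T_t = e^{-tA}$ commutes in the appropriate sense with the right action and therefore descends to a semigroup on the left von Neumann algebra $M$ generated by $\AA$.

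The substantial work is the \emph{complete Markov property}. The plan is to adapt Sauvageot's tracial argument: for a suitable family of normal contractions $\phi$ adapted to the modular structure (for example cut-offs of the form $a\mapsto (a\wedge \1)\vee(-\1)$ understood in the KMS/Tomita sense, possibly via $\Delta^{1/4}$-twisted positivity), one shows $\E(\phi(a))\le \E(a)$. The essential input is a chain-rule type estimate, established first for $a$ in a dense subalgebra of analytic elements where $\phi(a)\in\dom(\delta)$ via functional calculus, and then extended to $\dom(\bar\delta)$ by lower semicontinuity of $\E$. Complete Markovianity is obtained by applying the same argument to the matrix amplification $\delta^{(n)}\colon M_n(\AA)\to M_n(\H)$, which inherits the structure of a symmetric derivation into a Tomita bimodule from $\delta$.

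The main obstacle is exactly this chain-rule estimate in the non-tracial setting: one must check that the natural normal contractions preserve the Tomita algebra up to controllable approximation, that $\bar\delta$ applied to $\phi(a)$ factors through a contractive bimodule-valued operation on $\bar\delta(a)$, and that the twisted bimodule norm in $\H$ indeed decreases under this operation. Tracially, bimodule actions commute with both the inner product and the involution; modularly, they are twisted by $\Delta^{1/4}$, so the Tomita bimodule axioms are essential to carry the estimate through. Once this single inequality is established, modular invariance and right-invariance combine with it to place $\E$ inside the class characterized by the representation theorem, completing the proof.
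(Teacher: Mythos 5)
Your outline correctly identifies the target (show that $\E$ is a modular completely Dirichlet form and invoke the correspondence with GNS-symmetric quantum dynamical semigroups), and the easy parts are fine: $U_t$-invariance of $\E$ follows from $\delta\circ U_t=\U_t\circ\delta$ and closability, and $J$-invariance from $\delta\circ J=\J\circ\delta$. But the step you yourself flag as ``the main obstacle'' --- the chain-rule/contraction estimate $\E(\phi(a))\le\E(a)$ for the modular cut-offs, proved by functional calculus on a dense subalgebra and extended by lower semicontinuity --- is precisely the step that is not known how to carry out in the non-tracial setting, and your proposal does not supply it. The difficulty is structural, not technical: the order operations defining the Dirichlet property live in the Haagerup $L^2$ picture (positivity of the cone $\overline{\{\Delta^{1/4}a\}}$), where the product of two $L^2$-elements is essentially never in $L^2$, whereas the product rule for $\delta$ is formulated in terms of the Hilbert algebra multiplication. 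There is no known way to approximate the metric projection onto the cone $C$ by polynomials in the Tomita algebra product while controlling $\delta$, so the domain of $\delta$ need not be (even approximately) stable under the cut-offs, and the ``contractive bimodule-valued operation on $\bar\delta(a)$'' you posit has no candidate construction. Saying ``once this single inequality is established the proof is complete'' concedes the entire content of the theorem.

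The actual proof takes a completely different route to avoid this: it uses Haagerup's reduction method. One forms the crossed product $\tilde M=M\rtimes_{\sigma^\phi}G$ with $G=\bigcup_n 2^{-n}\IZ$, extends $\delta$ to a derivation $\tilde\delta$ on a sufficiently large Tomita subalgebra of the crossed product (this is where the $\delta$-topology and the completion $\widehat{\AA}^\delta$ are needed, so that multiplication by elements of $L(G)\otimes\IC 1$ preserves the domain), and then approximates $\tilde M$ by an increasing sequence of semifinite centralizers $M_n=\tilde M_{\phi_n}$ carrying traces $\tau_n$. On each $M_n$ the restricted derivation $\partial_n$ is a closable derivation over a \emph{tracial} von Neumann algebra, so Cipriani's tracial theorem applies and yields completely Dirichlet forms $\E_n$; a change-of-weight lemma transports these back to Dirichlet forms with respect to $\AA_{\tilde\phi}$, and an approximation lemma (via Ouhabaz's invariance criterion and $P_n\to 1$) assembles them into the statement for $\tilde\E$, hence for $\E$. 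If you want to salvage a direct argument along your lines, you would need to either prove the chain-rule estimate for the $\Delta^{1/4}$-twisted cut-offs --- which would be a new result --- or follow the reduction to the tracial case as the paper does.
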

Here a normal Tomita bimodule is a bimodule over a Tomita algebra that additionally carries a complex one-parameter group $(\U_z)$ and an involution $\J$ satisfying some compatibility conditions, and a symmetric derivation $\delta\colon\AA\to\H$ is a map that intertwines the complex one-parameter groups and involutions on $\AA$ and $\H$ and satisfies the product rule
\begin{equation*}
\delta(ab)=a\delta(b)+\delta(a)b.
\end{equation*}
These objects were introduced in \cite{Wir22b} and appear to be the natural non-tracial analogs of the Hilbert bimodule and derivation occurring in the context of completely Dirichlet forms on tracial von Neumann algebras.

Combined with the results from \cite{Wir22b}, we thus obtain a comprehensive picture of GNS-symmetric quantum Markov semigroups analogous to the result of Cipriani and Sauvageot for tracially symmetric semigroups. Among other potential applications, we hope that this result opens the gate for applications to non-tracial free probability and deformation/rigidity theory of type III von Neumann algebras similar to recent work in this direction in the tracial case.

One main difficulty when trying to prove that closable derivations generate completely Dirichlet forms (or semigroups of completely positive maps) is that the property defining derivations, the product rule, is an algebraic property, while Dirichlet forms are defined in terms of order properties, and the domain of a derivation is not necessarily closed under order operations. As such, the problem of properly dealing with domains is crucial. Note that it is unavoidable to allow for unbounded derivations as everywhere defined derivations yield norm continuous semigroups of completely positive maps, which is too restrictive for many applications.

In the tracial case, this difficulty can be overcome since order operations such as taking the positive part can be expressed in terms of functional calculus and as such can be approximated by polynomials. In the non-tracial case, the order operations can still be expressed in terms of functional analysis in the setup of Haagerup $L^p$ spaces, but the product rule is formulated in terms of Hilbert algebra multiplication, which is different from the product of two operators in Haagerup $L^2$ (which is only in $L^2$ if it is zero). Therefore it is not clear how to connect the two.

Instead of trying to follow the proof in the tracial setting, our proof strategy instead relies on Haagerup reduction method, which allows to embed a von Neumann algebra as an expected subalgebra of a bigger von Neumann algebra that can be approximated by finite von Neumann algebras. As it turns out, this reduction method is well-suited to reduce the problem at hand to the known case of tracial von Neumann algebras. One key challenge are again domain issues: For the Haagerup construction one has to extend the derivation to a domain on a crossed product that is sufficiently big, but such that the extension still satisfies the product rule. The essential new technical ingredient to overcome this kind of domain problems lies in the introduction of a new locally convex topology on the domain of a derivation that allows to extend derivations to derivations on a completion.

As a final note, considering the results from \cite{VW23}, it is a natural question whether the results from the present article can be extended to cover KMS-symmetric semigroups. For one, our methods crucially use commutation with the modular group, which fails for KMS-symmetric maps if they are not GNS-symmetric. But more severely, it seems like there are additional algebraic obstructions, already in finite dimensions: It is shown in \cite{VW23} that if $\E$ is a completely Dirichlet form on $L^2(M_n(\IC),\phi)$, then there exist self-adjoint matrices $v_j\in M_n(\IC)$ such that
\begin{equation*}
\E(\rho^{1/4}x\rho^{1/4})=\sum_j \mathrm{tr}(\abs{\rho^{1/4}[v_j,x]\rho^{1/4}}^2),
\end{equation*}
where $\rho$ is the density matrix inducing the state $\phi$ on $M_n(\IC)$. However, without further assumptions on the operators $v_j$, the quadratic form on the right side of the previous equation is not necessarily a completely Dirichlet form.

\subsection*{Outline of the article}

In Section \ref{sec:basics} we recall some basics regarding modular theory, completely Dirichlet forms on standard forms of von Neumann algebras and Tomita bimodules and derivations. In Section \ref{sec:delta_top} we introduce a topology on the domain of a derivation, the $\delta$-topology, and show that derivations can be extended to derivations on the completion in the $\delta$-topology. In Section \ref{sec:closable} we give a closability criterion for derivations in our setting. In Section \ref{sec:proof_main} we discuss how derivations can be extended to crossed products and discuss how completely Dirichlet forms behave with respect to change of the reference weight. Then we state and prove the main result of this article, Theorem \ref{thm:main}, showing that the quadratic form associated with a closable derivation is a modular completely Dirichlet form. Finally, in Section \ref{sec:examples} we discuss several classes of examples, including inner derivations, derivations arising in non-tracial free probability and derivation induced by cocycles on (possibly non-unimodular) locally compact groups.

\subsection*{Acknowledgments} The author was funded by the Austrian Science Fund (FWF) under the Esprit Programme [ESP 156]. For the purpose of Open Access, the authors have applied a CC BY public copyright licence to any Author Accepted Manuscript (AAM) version arising from this submission.

\section{Basics}\label{sec:basics}

In this section we briefly recap some material concerning modular theory and in particular Hilbert and Tomita algebras, completely Dirichlet forms, Tomita bimodules and derivations that is used in the later sections.

\subsection{Modular theory}

As our approach is formulated in the language of Hilbert and Tomita algebras, we  summarize the relevant definitions here. Our treatment mostly follows \cite[Chapters VI--VII]{Tak03}.

An algebra $\AA$ with involution $^\sharp$ (resp. $^\flat$) and inner product $\langle\,\cdot\,,\cdot\,\rangle$ is called \emph{left (resp. right) Hilbert algebra} if
\begin{itemize}
\item for every $a\in\AA$ the map $\pi_l(a)\colon\AA\to \AA$, $b\mapsto ab$ (resp. $b\mapsto ba$) is bounded,
\item $\langle ab,c\rangle=\langle b,a^\sharp c\rangle$ (resp. $\langle ab,c\rangle=\langle b,ca^\flat\rangle$) for all $a,b,c\in\AA$,
\item the involution $^\sharp$ (resp. $^\flat$) is closable,
\item the linear span of all products $ab$ with $a,b\in\AA$ is dense in $\AA$.
\end{itemize}

Let $M$ be a von Neumann algebra and $\phi$ a normal semi-finite faithful weight on $M$. We write $\n_\phi$ for the definition ideal $\{x\in M\mid \phi(x^\ast x)<\infty\}$ and $(\pi_\phi,L^2(M,\phi),\Lambda_\phi)$ for the associated semi-cyclic representation.

The prototypical example of a left Hilbert algebra is $\AA=\Lambda_\phi(\n_\phi\cap\n_\phi^\ast)$ with the product $\Lambda_\phi(x)\Lambda_\phi(y)=\Lambda_\phi(xy)$, the involution $\Lambda_\phi(x)^\sharp=\Lambda_\phi(x^\ast)$ and the inner product inherited from $L_2(M,\phi)$, that is, $\langle\Lambda_\phi(x),\Lambda_\phi(y)\rangle=\phi(x^\ast y)$. In this case, $\pi_l(\AA)^{\prime\prime}=\pi_\phi(M)$. We write $\AA_\phi$ for this left Hilbert algebra.

Conversely, every left Hilbert algebra $\AA$ gives rise to a von Neumann algebra $\pi_l(\AA)^{\prime\prime}$ acting on the completion of $\AA$ and a weight 
\begin{equation*}
\phi\colon \pi_l(\AA)^{\prime\prime}_+\to [0,\infty],\,\phi(x)=\begin{cases}\norm{\xi}^2&\text{if }x^{1/2}=\pi_l(\xi),\\\infty&\text{otherwise}.\end{cases}
\end{equation*}
If $\AA$ is a full left Hilbert algebra \cite[Definition VI.1.16]{Tak03}, then $\phi$ is a normal semi-finite faithful weight on $\pi_l(\AA)^{\prime\prime}$, and $\AA$ is canonically isomorphic to $\AA_\phi$.

Let $\HH$ be the completion of the left Hilbert algebra $\AA$. Since the involution $^\sharp$ on $\AA$ is closable, its closure $S$ on $\HH$ exists and has a polar decomposition $S=J\Delta^{1/2}$. The operator $\Delta$ is a non-singular positive self-adjoint operator, called the \emph{modular operator}, and $J$ is an anti-unitary involution, called the \emph{modular conjugation}. If $\AA$ is the left Hilbert algebra associated with a weight $\phi$, we write $\Delta_\phi$ and $J_\phi$ for the associated modular operator and modular conjugation. We write $\Lambda_\phi^\prime\colon \n_\phi^\ast\to L_2(M,\phi)$ for the map $x\mapsto J_\phi\Lambda_\phi(x^\ast)$.

If $\AA$ is full, the modular conjugation $J$ gives rise to the positive self-dual cone $P=\overline{\{\pi_l(a)Ja\mid a\in\AA\}}$ and $\pi_l(\AA)^{\prime\prime}$ is in standard form \cite[Definition IX.1.13]{Tak03}.

The modular operator $\Delta$ gives rise to a point weak$^\ast$ continuous group of automorphisms $x\mapsto \Delta^{it}x\Delta^{-it}$ on $\pi_l(\AA)^{\prime\prime}$. If $\phi$ is a normal semi-finite faithful weight on $M$, the group $\sigma^\phi$ given by $\sigma^\phi_t(x)=\pi_\phi^{-1}(\Delta_\phi^{it}\pi_\phi(x)\Delta_\phi^{-it})$ is called the \emph{modular group} associated with $\phi$.

If $(\alpha_t)_{t\in\IR}$ is a point weak$^\ast$ continuous group of $\ast$-automorphisms on $M$, then an element $x\in M$ is called \emph{entire analytic} if the map $t\mapsto \alpha_t(x)$ has an extension $z\mapsto \alpha_z(x)$ to the complex plane such that $z\mapsto\omega(\alpha_z(x))$ is analytic for every $\omega\in M_\ast$. The entire analytic elements form a weak$^\ast$ dense $\ast$-subalgebra of $M$.

A \emph{Tomita algebra} is a left Hilbert algebra $\AA$ endowed with a complex one-parameter group $(U_z)_{z\in\IC}$ of algebra automorphism such that
\begin{itemize}
\item $z\mapsto \langle a,U_z b\rangle$ is analytic for all $a,b\in\AA$,
\item $(U_z a)^\sharp=U_{\bar z}(a^\sharp)$ for all $a\in\AA$, $z\in\IC$,
\item $\langle U_z a,b\rangle=\langle a,U_{-\bar z}b\rangle$ for all $a,b\in\AA$, $z\in \IC$,
\item $\langle a^\sharp,b^\sharp\rangle=\langle U_{-i}b,a\rangle$ for all $a,b\in \AA$.
\end{itemize}
Note that every Tomita algebra becomes a right Hilbert algebra when endowed with the involution
\begin{equation*}
\AA\to\AA,a\mapsto a^\flat=U_{-i}(a^\sharp).
\end{equation*}
For a full left Hilbert algebra $\AA$ let
\begin{equation*}
\AA_0=\left\{\xi\in\bigcap_{n\in\IZ}D(\Delta^n)\,\bigg\vert\, \Delta^n\xi\in\AA\text{ for all }n\in\IZ\right\}.
\end{equation*}
For every $\xi\in\AA_0$ the map $t\mapsto \Delta^{it}\xi$ has an entire analytic extension $z\mapsto U_z\xi$ with $U_z\xi\in \AA_0$ for all $z\in\IC$. This makes $\AA_0$ into a Tomita algebra such that $\pi_l(\AA_0)^{\prime\prime}=\pi_l(\AA)^{\prime\prime}$.

In particular,
\begin{equation*}
(\AA_\phi)_0=\{\Lambda_\phi(x)\mid x\in \n_\phi\cap\n_\phi^\ast,\,x\text{ entire analytic for }\sigma^\phi\}.
\end{equation*}

\subsection{Completely Dirichlet forms}

Completely Dirichlet forms in the non-tracial setting were introduced by Goldstein and Lindsay \cite{GL95,GL99} in the language of GNS Hilbert spaces of states (or weights) and by Cipriani \cite{Cip97} in the language of standard forms with a fixed cyclic vector. Our approach is somewhat different from both of these formulations in that we use left Hilbert algebras, but in view of the previous subsection it is equivalent to the formulation by Goldstein--Lindsay (and to that of Cipriani in case the left Hilbert algebra has a unit).

Let $\AA$ be a full left Hilbert algebra with completion $\HH$. Let $C$ be the closure of $\{\Delta^{1/4}a\mid a\in \AA,\,0\leq\pi_l(a)\leq 1\}$ and let $P_C$ be the metric projection onto $C$. We say that a closed densely defined quadratic form $\E$ on $\HH$ is a \emph{Dirichlet form} with respect to $\AA$ if $\E\circ J=\E$ and $\E(P_C(a))\leq \E(a)$ for all $a\in \HH$ with $Ja=a$.

The Dirichlet form $\E$ is called \emph{completely Dirichlet form} if for every $n\in\IN$ the quadratic form
\begin{equation*}
\E^{(n)}\colon \HH\otimes M_n(\IC)\to [0,\infty],\,\E^{(n)}([\xi_{ij}])=\sum_{i,j=1}^n \E(\xi_{ij})
\end{equation*}
is a Dirichlet form with respect to $\AA\odot M_n(\IC)$. Here $M_n(\IC)$ carries the normalized Hilbert--Schmidt inner product and the multiplication and involution on $\AA\odot M_n(\IC)$ are given by $[a_{ij}][b_{ij}]=[\sum_k a_{ik}b_{kj}]$, $[a_{ij}]^\sharp=[a_{ji}^\sharp]$.

A (completely) Dirichlet form with respect to $\AA$ is called \emph{modular} (or GNS-symmetric) if $\E\circ U_t=\E$ for all $t\in\IR$.

Completely Dirichlet forms are of particular interest for their connection to semigroups of contractive completely positive maps on von Neumann algebras. Let us briefly sketch this correspondence. Proofs can be found in \cite[Theorems 4.9, 5.7]{GL99} for the wider class of KMS-symmetric semigroups. The result for GNS-symmetric semigroups follows from the fact that GNS symmetry is equivalent to KMS symmetry and commutation with the modular group (see \cite[Proposition 6.1]{AC82} for example).

Let $M$ be a von Neumann algebra. A \emph{quantum dynamical semigroup} is a semigroup of normal contractive completely positive operators on $M$ that is continuous in the point weak$^\ast$ topology. If $\phi$ is a normal semi-finite faithful weight on $M$, a quantum dynamical semigroup $(P_t)$ is called \emph{GNS-symmetric} with respect to $\phi$ if $\phi\circ P_t\leq\phi$ for all $t\geq 0$ and
\begin{equation*}
\phi(P_t(x)^\ast y)=\phi(x^\ast P_t(y))
\end{equation*}
for all $x,y\in \n_\phi$ and $t\geq 0$.

Every GNS-symmetric quantum dynamical semigroup gives rise to a strongly continuous semigroup $(T_t)$ on $L^2(M,\phi)$, its \emph{GNS implementation}, acting by $T_t\Lambda_\phi(x)=\Lambda_\phi(P_t(x))$ for $x\in\n_\phi$, and the associated quadratic form is a modular completely Dirichlet form with respect to $\AA_\phi$. Vice versa, the strongly continuous semigroup associated with a modular completely Dirichlet form is the GNS implementation of a GNS-symmetric quantum dynamical semigroup.

We call a completely Dirichlet form a \emph{quantum Dirichlet form} if the associated quantum dynamical semigroups consists of unital maps. A criterion in terms of the form itself is given in \cite[Proposition 3.2]{Wir22b}.

\subsection{Tomita bimodules and derivations}

Tomita bimodules were introduced in \cite{Wir22b} as codomains of the derivations associated with modular completely Dirichlet forms.

Let $\AA$ be a Tomita algebra. A \emph{Tomita bimodule} over $\AA$ is an inner product space $\H$ endowed with non-degenerate commuting left and right actions of $\AA$, an anti-isometric involution $\J\colon\H\to\H$ and a complex one-parameter group $(\U_z)$ of isometries such that
\begin{itemize}
\item $\norm{a\xi b}\leq \norm{\pi_l(a)}\norm{\pi_r(b)}\norm{\xi}$ for $a,b\in\AA$, $\xi\in\H$,
\item $\langle a\xi b,\eta\rangle=\langle\xi, a^\sharp \eta b^\flat\rangle$ for $a,b\in\AA$, $\xi,\eta\in \H$,
\item $\U_z(a\xi b)=(U_z a)(\U_z \xi)(U_z b)$ for $a,b\in \AA$, $\xi\in\H$, $z\in\IC$,
\item $\J(a\xi b)=(Jb)(\J\xi)(Ja)$ for $a,b\in\AA$, $\xi\in\H$,
\item $\U_z \J=\J\U_{\bar z}$ for $z\in\IC$.
\end{itemize}
Let $\bar\H$ be the completion of $\H$. The first two bullet points imply that $\pi_l(a)\mapsto (\xi\mapsto a\xi)$ extends to a non-degenerate $\ast$-homorphism from $\pi_l(\AA)$ to $B(\bar\H)$. If this map can be extended to a normal $\ast$-homomorphism from $\pi_l(\AA)^{\prime\prime}$ to $B(\bar\H)$, then we say that $\H$ is a \emph{normal Tomita bimodule}. Requiring normality for the right action instead leads to the same notion of normal Tomita bimodule.

If $\AA$ is a Tomita algebra and $\H$ a bimodule over $\AA$, we call a linear map $\delta\colon \AA\to\H$ a \emph{derivation} if it satisfies the product rule
\begin{equation*}
\delta(ab)=a\delta(b)+\delta(a)b
\end{equation*}
for $a,b\in\AA$. If $\H$ is a Tomita bimodule over $\AA$, we say that a derivation $\delta\colon\AA\to\H$ is \emph{symmetric} if $\delta\circ J=\J\circ\delta$ and $\delta\circ U_z=\U_z\circ\delta$ for all $z\in\IC$.

If $\AA$ is a full left Hilbert algebra and $\E$ a modular quantum Dirichlet form with respect to $\AA$, it is shown in \cite[Theorem 6.3]{Wir22b} that
\begin{equation*}
\AA_\E=\{a\in \AA_0\mid U_z a\in \dom(\E)\text{ for all }z\in\IC\}
\end{equation*}
is a Tomita subalgebra of $\AA_0$ and a core for $\E$. Moreover, by \cite[Theorem 6.8]{Wir22b} there exists a Tomita bimodule $\H$ over $\AA$ and a symmetric derivation $\delta\colon \AA_\E\to\H$ such that
\begin{equation*}
\E(a,b)=\langle\delta(a),\delta(b)\rangle_\H
\end{equation*}
for $a,b\in \AA_\E$.

Under the minimality condition $\H=\mathrm{lin}\{\delta(a)b\mid a,b\in\AA_\E\}$, the pair $(\H,\delta)$ is uniquely determined by $\E$ up to isometric isomorphism preserving the Tomita bimodule structure and intertwining the derivations \cite[Theorem 6.9]{Wir22b}. By a slight abuse of notation, any such pair $(\H,\delta)$ is called the \emph{first-order differential structure associated with $\E$}. If $\H$ is a normal Tomita bimodule, the quantum Dirichlet form $\E$ is called \emph{$\Gamma$-regular}. A characterization in terms of the carré du champ is given in \cite[Theorem 7.2]{Wir22b}.

\section{\texorpdfstring{$\delta$-topology and completeness}{δ-topology and completeness}}\label{sec:delta_top}

In this section we introduce a locally convex topology on the domain of a closable symmetric derivation, called the $\delta$-topology. This topology is strong enough to ensure that the derivation extends to a derivation on the completion, which is a key technical ingredient in the proof of the main theorem later.

For the definition of the $\delta$-topology recall that the Mackey topology $\tau(M,M_\ast)$ on a von Neumann algebra $M$ is the finest linear topology $\mathcal T$ on $M$ such that the topological dual of $(M,\mathcal T)$ is $M_\ast$. Equivalently, it is the finest locally convex topology on $M$ that coincides with the strong$^\ast$ topology on norm bounded sets \cite[Proposition 2.11, Remark 2.12]{Coo87}. It has the advantage over the other usual locally convex topologies on $M$ of being complete, which is convenient for several of the following arguments.

Let $\AA$ be a Tomita algebra with completion $\HH$, let $\H$ be a normal Tomita bimodule over $\AA$ and $\delta\colon \AA\to\H$ a symmetric derivation. We define the $\delta$-topology $\mathcal T_\delta$ on $\AA$ as the coarsest locally convex topology that makes the maps
\begin{equation*}
\AA\to \HH\oplus\bar \H,\,a\mapsto(\Delta^n a,\delta(\Delta^n a))
\end{equation*}
continuous with respect to the norm topology on $\HH\oplus \bar \H$ for all $n\in \IZ$ and the maps
\begin{equation*}
\AA\to B(\HH),\,a\mapsto \pi_l(\Delta^n a)
\end{equation*}
continuous for the Mackey topology on $B(\HH)$ for all $n\in\IZ$. Clearly, the $\delta$-topology is stronger than the topology induced by the graph norm $(\norm{\cdot}_\HH^2+\norm{\delta(\cdot)}_\H^2)^{1/2}$.

\begin{lemma}
If $\E$ is a $\Gamma$-regular modular quantum Dirichlet form and $(\H,\delta)$ the associated first-order differential structure, then $\AA_\E$ is complete in the $\delta$-topology.
\end{lemma}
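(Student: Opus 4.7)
The plan is to take a Cauchy net $(a_\alpha)$ in $(\AA_\E,\mathcal T_\delta)$ and exhibit a limit in $\AA_\E$. Write $\BB$ for the ambient full left Hilbert algebra, so $\AA_\E\subseteq\BB_0$ and $\HH$ is the common completion. Unwinding the definition of $\mathcal T_\delta$, for each $n\in\IZ$ the nets $(\Delta^n a_\alpha)$ in $\HH$, $(\delta(\Delta^n a_\alpha))$ in $\bar\H$, and $(\pi_l(\Delta^n a_\alpha))$ in $B(\HH)$ equipped with the Mackey topology are Cauchy, hence convergent (by completeness of the Mackey topology) to elements $b_n$, $\eta_n$ and $T_n$ respectively. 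Closedness of $\Delta^n$ forces $b_n=\Delta^n b_0$, so $b_0\in\bigcap_n D(\Delta^n)$, and closability of $\delta$ (whose closure has domain $\dom(\E)$) places $b_n$ in $\dom(\E)$ with $\bar\delta(b_n)=\eta_n$.

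Next I would verify $b_0\in\BB_0$. A Mackey Cauchy net is Mackey bounded, hence norm bounded by Banach--Steinhaus, and on norm-bounded sets the Mackey topology coincides with the strong topology, so $\pi_l(\Delta^n a_\alpha)\to T_n$ strongly. For right-bounded $\xi\in\BB$ one has $\pi_l(\Delta^n a_\alpha)\xi=\pi_r(\xi)(\Delta^n a_\alpha)\to\pi_r(\xi)b_n$, so $T_n$ agrees with left multiplication by $b_n$. Thus each $b_n$ is left-bounded, $b_n\in\BB$ by fullness, and hence $b_0\in\BB_0$.

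The heart of the proof is to upgrade the discrete statement $b_n\in\dom(\E)$ for $n\in\IZ$ to $U_z b_0\in\dom(\E)$ for every $z\in\IC$, which is what membership in $\AA_\E$ demands. For this I would use complex interpolation. For $\xi\in\AA_\E$, log-convexity of $t\mapsto\norm{\Delta^{-t}\xi}_\HH^2$ from the spectral calculus of $\Delta$, together with unitarity of $U_s$ for real $s$, yields
\begin{equation*}
\norm{U_z\xi}_\HH\leq\max(\norm{\Delta^{-n}\xi}_\HH,\norm{\Delta^n\xi}_\HH)\quad\text{whenever }\abs{\mathrm{Im}\,z}\leq n,
\end{equation*}
and the symmetry relation $\delta\circ U_w=\U_w\circ\delta$ combined with the analogous log-convex bound for the one-parameter group on $\bar\H$ gives
\begin{equation*}
\norm{\U_z\delta(\xi)}_{\bar\H}\leq\max(\norm{\delta(\Delta^{-n}\xi)}_{\bar\H},\norm{\delta(\Delta^n\xi)}_{\bar\H}).
\end{equation*}
Applied to $\xi=a_\alpha-a_\beta$, both right-hand sides tend to zero, so $(U_z a_\alpha)_\alpha$ is Cauchy in the graph norm of $\E$ uniformly on each horizontal strip $\abs{\mathrm{Im}\,z}\leq n$. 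By closedness of $U_z$ and of $\E$, the limit must be $U_z b_0\in\dom(\E)$. Hence $b_0\in\AA_\E$, and the convergence $a_\alpha\to b_0$ in $\mathcal T_\delta$ is immediate from the constructions of $b_n$, $\eta_n$ and $T_n$.

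The main obstacle I expect is precisely this interpolation step: the $\delta$-topology only directly controls the derivation at integer iterates of the modular operator, whereas $\AA_\E$ demands control at every complex $z$. Pushing the log-convex bound from the modular operator on $\HH$ faithfully onto the one-parameter group $\U_z$ on the bimodule $\bar\H$ is the critical technical point, and is the reason the definition of $\mathcal T_\delta$ involves all $n\in\IZ$ rather than just $n=0$. The Mackey clause, while not appearing in the interpolation, is essential for guaranteeing that the left actions converge to the left action of the limit.
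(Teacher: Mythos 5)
Your argument is correct and is essentially the paper's proof: extract limits $b_n$, $\eta_n$, $T_n$ from the three families of Cauchy nets, use closedness of $\Delta^n$ and $\bar\delta$ together with the identity $T_n\xi=\pi_r(\xi)b_n$ on a dense set of right-bounded vectors to place each $\Delta^n b_0$ in $\dom(\E)\cap\AA^{\prime\prime}$, and conclude $b_0\in\AA_\E$ — the paper leaves the passage from integer powers of $\Delta$ to all $U_z$, $z\in\IC$, implicit, whereas you spell it out via log-convexity and the intertwining $\delta\circ U_z=\U_z\circ\delta$, which is a worthwhile elaboration of the same route rather than a different one. One inessential blemish: a Mackey--Cauchy \emph{net} (unlike a sequence) need not be bounded, so the Banach--Steinhaus/strong-convergence detour is unjustified as stated; but Mackey convergence already implies weak operator convergence, which is all you need to identify $T_n\xi=\pi_r(\xi)b_n$, so nothing is lost.
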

\begin{proof}
Let $(a_j)$ be a Cauchy net in $\AA_\E$ with respect to the $\delta$-topology. In particular, $(\Delta^n a_j,\delta(\Delta^n a_j))_j$ is Cauchy in $\HH\oplus\bar\H$ for all $n\in\IZ$. Since $\Delta^n$ and $\delta$ are closable on $\AA_\E$, it follows that there exists $a\in \HH$ such that $(\Delta^n a_j,\delta(\Delta^n a_j))\to (\Delta^n a,\bar\delta(\Delta^n a))$ for all $n\in\IZ$. In particular, $a\in\bigcap_{n\in\IZ}\dom(\Delta^n)$ and $\Delta^n a\in\dom(\bar \delta)=\dom(\E)$ for all $n\in\IZ$.

Moreover, as the Mackey topology is complete, for $n\in\IZ$ there exists $x_n\in B(\HH)$ such that $\pi_l(\Delta^n a_j)\to x_n$ with respect to $\tau(B(\HH),B(\HH)_\ast)$. For $b\in\AA_\E$ we have
\begin{equation*}
x_n b=\lim_j \pi_l(\Delta^n a_j)b=\lim_j \pi_r(b)\Delta^n a_j=\pi_r(b)\Delta^n a.
\end{equation*}
Since $\AA_\E$ is dense in $\HH$, it follows that $\Delta^n a\in \AA_\E^{\prime\prime}$ and $\pi_l(\Delta^n a_j)\to\pi_l(\Delta^n a)$ for all $n\in\IZ$.

Altogether we conclude that $a\in \AA_\E$ and $a_j\to a$ in the $\delta$-topology.
\end{proof}

\begin{lemma}\label{lem:diff_calc_delta_complete}
If $\AA$ is a Tomita algebra, $\H$ a Tomita bimodule over $\AA$ and $\delta\colon \AA\to \H$ a closable symmetric derivation, the inclusion of $\AA$ into its completion $\HH$ extends to an injective map from the completion of $\AA$ in the $\delta$-topology to $\HH$.
\end{lemma}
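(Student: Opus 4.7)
The plan is to exploit the fact that the $\delta$-topology is defined by a family of seminorms among which the norm $\norm{\,\cdot\,}_\HH$ itself appears (take $n=0$ in the first family of defining maps). This alone makes $\mathcal T_\delta$ Hausdorff and strictly finer than the $\HH$-norm topology, so the inclusion $(\AA,\mathcal T_\delta)\hookrightarrow\HH$ is linear and continuous. Since $\HH$ is complete, the universal property of the locally convex completion $\tilde\AA$ yields a unique continuous linear extension $\tilde\iota\colon\tilde\AA\to\HH$. All content of the lemma is therefore in the injectivity of $\tilde\iota$.

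I would reduce injectivity to the claim that every $\mathcal T_\delta$-Cauchy net $(a_j)$ in $\AA$ with $a_j\to 0$ in $\HH$ already converges to $0$ in $\mathcal T_\delta$. By Cauchyness, for each $n\in\IZ$ one obtains limits $\xi_n\in\HH$, $\eta_n\in\bar\H$ of $\Delta^n a_j$ and $\delta(\Delta^n a_j)$, as well as a Mackey limit $x_n\in B(\HH)$ of $\pi_l(\Delta^n a_j)$ (using completeness of $\tau(B(\HH),B(\HH)_\ast)$ as in the previous lemma). The goal is to show $\xi_n=\eta_n=0$ and $x_n=0$ for every $n$. Closedness of $\Delta^n$ together with $a_j\to 0$ gives $\xi_n=0$, and then the hypothesized closability of $\delta$ forces $\eta_n=0$. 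For the operator limit I would invoke that Mackey convergence implies weak$^\ast$ convergence, so testing against the normal functionals $T\mapsto\langle Tb,c\rangle$ for $b\in\AA$, $c\in\HH$ yields
\begin{equation*}
\langle x_n b,c\rangle=\lim_j\langle\pi_l(\Delta^n a_j)b,c\rangle=\lim_j\langle\pi_r(b)\Delta^n a_j,c\rangle=0,
\end{equation*}
because $\pi_r(b)$ is bounded and $\Delta^n a_j\to 0$ in $\HH$. Density of $\AA$ in $\HH$ then gives $x_n=0$, so $(a_j)$ converges to $0$ in every seminorm defining $\mathcal T_\delta$.

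The only mildly delicate point is the Mackey/weak$^\ast$ step: one needs the observation that $\tau(B(\HH),B(\HH)_\ast)$ shares its continuous dual with the weak$^\ast$ topology, so Mackey convergence can be tested against the rank-one normal functionals above, after which the identity $\pi_l(\Delta^n a_j)b=\pi_r(b)\Delta^n a_j$ on $\AA$ collapses the computation to the previously established $\Delta^n a_j\to 0$. Everything else is a routine closability argument run coordinate-wise over $n\in\IZ$, using that $\Delta^n a_j\in\AA$ for $a_j\in\AA$ since Tomita algebras are stable under $(U_z)$.
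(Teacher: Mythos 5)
Your argument is correct and follows essentially the same route as the paper: reduce injectivity to showing that a $\mathcal T_\delta$-Cauchy net tending to $0$ in $\HH$ tends to $0$ in $\mathcal T_\delta$, handle the Hilbert-space seminorms via closability of $\Delta^n$ and $\delta$, and kill the Mackey limit of $\pi_l(\Delta^n a_j)$ by testing against right multiplication by elements of $\AA$ exactly as in the preceding lemma. The paper merely compresses the operator step into ``a similar argument as in the previous lemma,'' which you have correctly unpacked.
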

\begin{proof}
We have to show that if $(a_j)$ is a Cauchy net in $\AA$ with respect to the $\delta$-topology and $a_j\to 0$ in $\HH$, then $a_j\to 0$ in the $\delta$-topology. Since $\Delta^n$, $n\in\IZ$, and $\delta$ are closable, we have $(\Delta^n a_j,\delta(\Delta^n a_j))\to 0$ for all $n\in\IZ$. Furthermore, using the completeness of the Mackey topology and a similar argument as in the previous lemma, one sees that $\pi_l(\Delta^n a_j)\to 0$ in $\tau(B(\HH),B(\HH)_\ast)$ for all $n\in\IZ$. Hence $a_j\to 0$ in the $\delta$-topology.
\end{proof}

Let $\widehat{\AA}^\delta$ denote the set of all elements $a\in\HH$ for which there exists a net $(a_j)$ in $\AA$ such that $a_j\to a$ in $\HH$ and $(a_j)$ is Cauchy in the $\delta$-topology. By the previous lemma, $\widehat{\AA}^\delta$ is a completion of $\AA$ in the $\delta$-topology, and we call it simply the $\delta$-completion of $\AA$. It is not hard to see that $\widehat{\AA}^\delta$ is a Tomita subalgebra of $(\AA^{\prime\prime})_0$ and contained in $\dom(\bar\delta)$.

Recall that if $\HH$ is a normal Tomita bimodule over $\AA$, we can continuously extend the left and right action of $\AA$ and the maps $\J$ and $\U_t$, $t\in\IR$, to the Hilbert completion $\bar\H$. This is usually not possible for $\U_z$, $z\in\IC\setminus\IR$. We define
\begin{equation*}
\H^{\mathrm{a}}=\{\xi\in \bar\H\mid t\mapsto \bar\U_t\xi\text{ has an entire extension}\}.
\end{equation*}
If it exists, this entire extension is unique and will be denoted by $z\mapsto \bar\U_z\xi$. Clearly, $\H\subset\H^{\mathrm{a}}$ and $\U_z\subset \bar\U_z$ for all $z\in\IC$.

\begin{lemma}
If we endow $\H^{\mathrm{a}}$ with the coarsest locally convex topology that makes
\begin{equation*}
\H^{\mathrm{a}}\to\bar\H,\,\xi\mapsto\bar\U_{in}\xi
\end{equation*}
continuous for all $n\in\IZ$, then $\H^{\mathrm{a}}$ is complete.
\end{lemma}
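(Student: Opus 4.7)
The plan is to take a Cauchy net $(\xi_j)$ in $\H^{\mathrm{a}}$ and reconstruct its limit from its values along the imaginary integer points by analytic continuation. By the definition of the topology, for each $n\in\IZ$ the net $(\bar\U_{in}\xi_j)_j$ is Cauchy in $\bar\H$; I denote its limit by $\eta_n$ and set $\xi:=\eta_0$. The goal is then to show that $\xi\in\H^{\mathrm{a}}$ and $\bar\U_{in}\xi=\eta_n$ for every $n$, which is exactly the convergence $\xi_j\to\xi$ in the topology of $\H^{\mathrm{a}}$.

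The central tool will be the Hadamard three-lines theorem applied to the entire $\bar\H$-valued functions $f_j(z):=\bar\U_z\xi_j$. Since $\bar\U_u$ is unitary on $\bar\H$ for real $u$, analytic continuation of the identity $\bar\U_u\bar\U_t\xi_j=\bar\U_{u+t}\xi_j$ from the real axis yields $\bar\U_u f_j(z)=f_j(u+z)$ for all $u\in\IR$ and all $z\in\IC$, which produces the key norm identity
\begin{equation*}
\|f_j(z)-f_k(z)\|=\|\bar\U_{i\,\mathrm{Im}\,z}(\xi_j-\xi_k)\|.
\end{equation*}
Fixing $n\in\IN$, the map $f_j-f_k$ is continuous and bounded on the closed strip $\{|\mathrm{Im}\,z|\le n\}$ (its norm depends only on $\mathrm{Im}\,z$, and $v\mapsto \bar\U_{iv}(\xi_j-\xi_k)$ is continuous on the compact interval $[-n,n]$) and analytic on its interior. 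The three-lines theorem, combined with convexity, then yields
\begin{equation*}
\sup_{|\mathrm{Im}\,z|\le n}\|f_j(z)-f_k(z)\|\le \max\bigl(\|\bar\U_{in}(\xi_j-\xi_k)\|,\|\bar\U_{-in}(\xi_j-\xi_k)\|\bigr),
\end{equation*}
which tends to $0$ by the Cauchy hypothesis. Hence $(f_j)$ converges uniformly on every horizontal strip of bounded height to some entire function $f\colon \IC\to\bar\H$.

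To finish, I will observe that for real $t$ one has $f_j(t)=\bar\U_t\xi_j\to \bar\U_t\xi$ in $\bar\H$, so $f(t)=\bar\U_t\xi$ for all $t\in\IR$; thus $f$ is the entire extension of $t\mapsto \bar\U_t\xi$, certifying $\xi\in\H^{\mathrm{a}}$ and $\bar\U_z\xi=f(z)$. Evaluating at $z=in$ gives $\bar\U_{in}\xi=f(in)=\eta_n$, which is the required convergence. The main technical hurdle is verifying the hypotheses of the three-lines theorem; this rests on the fact that the $\xi_j$ themselves lie in $\H^{\mathrm{a}}$, which simultaneously supplies the entireness of each $f_j$ and, through the norm identity above, the boundedness on each strip.
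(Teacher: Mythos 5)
Your proof is correct, but it takes a genuinely different route from the paper. The paper's argument is spectral: writing $\bar\U_t=A^{it}$ for the non-singular positive self-adjoint operator $A$ furnished by Stone's theorem, it identifies $\H^{\mathrm{a}}$ with $\bigcap_{n\in\IZ}\dom(A^n)$ and recognizes the given topology as the projective limit of the Banach spaces $\dom(A^n)\cap\dom(A^{-n})$ with their graph norms; completeness is then the standard completeness of a reduced projective limit of Banach spaces (ultimately resting on the closedness of each $A^{\pm n}$). You instead argue directly with the defining property of $\H^{\mathrm{a}}$: the translation identity $\bar\U_u f_j(z)=f_j(u+z)$ reduces the size of $f_j-f_k$ on a strip to a function of $\mathrm{Im}\,z$ alone, the three-lines theorem pushes the supremum to the boundary lines $\mathrm{Im}\,z=\pm n$ where the Cauchy hypothesis applies, and locally uniform convergence of entire functions hands you the entire extension of $t\mapsto\bar\U_t\xi$ for the limit $\xi$, together with $\bar\U_{in}\xi=\lim_j\bar\U_{in}\xi_j$. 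All the steps check out (in particular the vector-valued three-lines theorem follows from the scalar one by pairing with unit vectors, and the boundedness hypothesis on each strip is supplied by continuity of $v\mapsto\bar\U_{iv}(\xi_j-\xi_k)$ on $[-n,n]$, as you note). What each approach buys: the paper's is a two-line reduction to well-known functional-analytic machinery, while yours is longer but self-contained, avoids the spectral theorem and the identification $\H^{\mathrm{a}}=\bigcap_n\dom(A^n)$ entirely, and makes explicit the analytic-continuation mechanism that is implicitly behind that identification.
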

\begin{proof}
If $A$ is the unique non-singular positive self-adjoint operator in $\bar\H$ such that $\bar\U_t=A^{it}$ for $t\in\IR$, then $\H^{\mathrm{a}}=\bigcap_{n\in\IZ}\dom(A^n)$ and $\H^{\mathrm{a}}$ is the projective limit of the Banach spaces $(\dom(A^n)\cap \dom(A^{-n}),\norm{\cdot}_{\bar\H}+\norm{A^n \,\cdot\,}_{\bar\H}+\norm{A^{-n}\,\cdot\,}_{\bar\H})$ in the topology described in the lemma. In particular, $\H^{\mathrm{a}}$ is complete.
\end{proof}


Since $\H$ is a \emph{normal} Tomita bimodule over $\AA$, the Hilbert completion $\bar\H$ has a canonical structure of a $\pi_l(\AA)^{\prime\prime}$-$\pi_l(\AA)^{\prime\prime}$ correspondence determined by
\begin{equation*}
\pi_l(a)\cdot \xi\cdot J\pi_r(b)^\ast J=a\xi b
\end{equation*}
for $a,b\in \AA$ and $\xi\in \H$.

If $a\in \widehat{\AA}^\delta$, $(a_j)$ is a net in $\AA$ such that $a_j\to a$ in the $\delta$-topology and $\xi\in\bar\H$, then
\begin{equation*}
\bar\U_t(\pi_l(a)\cdot \xi)=\lim_j \bar\U_t(\pi_l(a_j)\xi)=\lim_j \pi_l(U_t a_j)\bar\U_t\xi=\pi_l(U_t a)\cdot\bar\U_t\xi.
\end{equation*}
Thus, if $\xi\in\H^{\mathrm{a}}$, then $z\mapsto \pi_l(U_z a)\cdot \bar\U_z\xi$ is an entire continuation of $t\mapsto \bar\U_t(\pi_l(a)\cdot\xi)$, which implies $\pi_l(a)\xi\in \H^{\mathrm{a}}$. Likewise, if $b\in \widehat{\AA}^\delta$, then $\xi\cdot J\pi_r(b)^\ast J\in \H^{\mathrm{a}}$. It is then routine to check that the bimodule structure given by $a\xi b=\pi_l(a)\cdot\xi\cdot J\pi_r(b)^\ast J$, then group $(\bar\U_z)_{z\in\IC}$ and the restriction of $\bar\J$ make $\H^{\mathrm{a}}$ into a Tomita bimodule over $\widehat{\AA}^\delta$.

\begin{lemma}\label{lem:completion_deriv}
If $\AA$ is a Tomita algebra, $\H$ is a Tomita bimodule over $\AA$ and $\delta\colon\AA\to\H$ is a closable symmetric derivation with closure $\bar\delta$, then $\bar\delta(\widehat{\AA}^\delta)\subset \H^{\mathrm{a}}$ and $\bar\delta\colon \widehat{\AA}^\delta\to\H^{\mathrm{a}}$ is a symmetric derivation.
\end{lemma}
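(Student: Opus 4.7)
The proof plan is to approximate any $a\in\widehat{\AA}^\delta$ by a net $(a_j)\subset\AA$ that is Cauchy in the $\delta$-topology and then transfer the relevant identities via closedness of $\bar\delta$. For the inclusion $\bar\delta(\widehat{\AA}^\delta)\subset\H^{\mathrm{a}}$, the key identity is $\delta(\Delta^n a_j)=\bar\U_{-in}\delta(a_j)$, which follows from symmetry of $\delta$ together with $\Delta^n=U_{-in}$ on $\AA$. By definition of the $\delta$-topology, $(\delta(\Delta^n a_j))_j$ is Cauchy in $\bar\H$ for every $n\in\IZ$, which is exactly the statement that $(\delta(a_j))_j$ is Cauchy in $\H^{\mathrm{a}}$ in the topology introduced in the preceding lemma. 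Completeness of $\H^{\mathrm{a}}$ produces a limit $\xi\in\H^{\mathrm{a}}$, and closedness of $\bar\delta$ together with $\delta(a_j)\to\xi$ in $\bar\H$ identifies $\bar\delta(a)=\xi\in\H^{\mathrm{a}}$.

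For the symmetry of $\bar\delta$, the $J$- and real $U_t$-intertwinings follow directly from closedness: both operations are (anti-)unitary on $\HH$ and on $\bar\H$, leave $\widehat{\AA}^\delta$ stable as a Tomita subalgebra, and conjugate $\delta$ correctly on $\AA$ by assumption, so applying them termwise to an approximating net preserves all convergences and transfers the intertwining relations. To promote this to complex $z$, I use log-convexity of $s\mapsto\|\Delta^{-s}c\|$ on $c\in\bigcap_n\dom(\Delta^n)$ (a three-lines inequality): the countably many $\delta$-topology conditions $\|\Delta^n(a_j-a)\|\to 0$ for $n\in\IZ$ upgrade to $\|U_z(a_j-a)\|=\|\Delta^{-\operatorname{Im} z}(a_j-a)\|\to 0$ uniformly on compact subsets of $\IC$, and the same reasoning applied inside $\H^{\mathrm{a}}$ yields $\bar\U_z\delta(a_j)\to\bar\U_z\bar\delta(a)$ in $\bar\H$ uniformly on compacts. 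Combined with the on-$\AA$ relation $\delta(U_z a_j)=\bar\U_z\delta(a_j)$ and closedness of $\bar\delta$, this gives $\bar\delta(U_z a)=\bar\U_z\bar\delta(a)$ for every $z\in\IC$.

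The main obstacle is the product rule $\bar\delta(ab)=a\bar\delta(b)+\bar\delta(a)b$ for $a,b\in\widehat{\AA}^\delta$, which I obtain by two successive passages to the limit on approximating nets $(a_j),(b_k)\subset\AA$. Fixing $b_k\in\AA$ and letting $j\to\infty$ in $\delta(a_jb_k)=a_j\delta(b_k)+\delta(a_j)b_k$: the terms $a_jb_k=\pi_r(b_k)a_j\to ab_k$ in $\HH$ and $\delta(a_j)b_k\to\bar\delta(a)b_k$ in $\bar\H$ follow from norm-boundedness of $\pi_r(b_k)$ and of the right action by $b_k\in\AA$ on $\bar\H$. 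The subtle convergence $a_j\delta(b_k)\to a\delta(b_k)$ uses that $(\pi_l(a_j))$, being a Mackey-Cauchy net in $B(\HH)=(B(\HH)_\ast)^\ast$, is norm-bounded by the uniform boundedness principle applied to $B(\HH)_\ast$; on norm-bounded sets Mackey coincides with strong$^\ast$, and the normal left action $\pi_l(\AA)^{\prime\prime}\to B(\bar\H)$ is strong$^\ast$-continuous on bounded sets, yielding norm convergence in $\bar\H$. Closedness of $\bar\delta$ then delivers $\bar\delta(ab_k)=a\delta(b_k)+\bar\delta(a)b_k$. Letting $k\to\infty$, the convergences $ab_k\to ab$ and $a\delta(b_k)\to a\bar\delta(b)$ are immediate from boundedness of the left action by $a$, while $\bar\delta(a)b_k\to\bar\delta(a)b$ is handled symmetrically by invoking the $J$-symmetry just established together with $J$-invariance of the $\delta$-topology, which converts the right-action limit to the left-action limit already treated. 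A final application of closedness completes the argument. The principal difficulty throughout is precisely that the $\delta$-topology controls the left regular representation directly but the right one only via this detour through $J$.
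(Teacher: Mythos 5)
Your overall strategy is the paper's own: approximate $a\in\widehat{\AA}^\delta$ by a $\delta$-Cauchy net, use the identity $\delta(\Delta^n a_j)=\U_{-in}\delta(a_j)$ together with completeness of $\H^{\mathrm{a}}$ and closedness of $\bar\delta$ to get $\bar\delta(a)\in\H^{\mathrm{a}}$, and transfer the algebraic identities along the net. The paper compresses the second half into ``routine computations''; your treatment of the $J$- and $U_z$-intertwining (including the three-lines upgrade from integer to real powers of $\Delta$ on the Hilbert-space level) and of the left-action limit $a_j\delta(b_k)\to a\delta(b_k)$ (Mackey-Cauchy $\Rightarrow$ norm-bounded $\Rightarrow$ strong$^\ast$ on bounded sets $\Rightarrow$ strong convergence of the normal left action on $\bar\H$) is correct and is the intended argument.

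The one step you assert without justification is the ``$J$-invariance of the $\delta$-topology'' used to convert the right-action limit $\bar\delta(a)b_k\to\bar\delta(a)b$ into a left-action limit. The Hilbert-space and graph-norm components of $\mathcal T_\delta$ are indeed $J$-invariant, but the Mackey component is not formally so: since $Ja=\Delta^{1/2}(a^\sharp)$, one has $\pi_l(\Delta^n Ja)=\pi_l(\Delta^{-(n+1/2)}a)^\ast$, so $J$-invariance amounts to Mackey control of $\pi_l(\Delta^{s}a)$ at \emph{half-integer} exponents $s$, whereas the definition of $\mathcal T_\delta$ only records integer powers. This is precisely where the right regular representation hides (indeed $J\pi_r(b)^\ast J=\pi_l(\Delta^{-1/2}b)$), so the point cannot be waved away; it is the actual content of the ``principal difficulty'' you identify. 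The claim is true, but needs an interpolation lemma: a three-lines estimate applied to $z\mapsto\langle\pi_l(U_z c)\xi,\eta\rangle$ on the strip $-1\le\operatorname{Im}z\le 0$ gives $\norm{\pi_l(\Delta^{1/2}c)}\le\norm{\pi_l(c)}^{1/2}\norm{\pi_l(\Delta c)}^{1/2}$, hence uniform norm bounds along the net; for $\xi$ right-bounded, the identity $\pi_l(\Delta^{1/2}c)\xi=\pi_r(\xi)\Delta^{1/2}c$ together with $\norm{\Delta^{1/2}c}\le\norm{c}^{1/2}\norm{\Delta c}^{1/2}$ (quantities $\mathcal T_\delta$ does control) gives strong convergence on a dense set, and similarly for adjoints via $(\Delta^{1/2}c)^\sharp=\Delta^{-1/2}(c^\sharp)=J\Delta c$; finally, a norm-bounded strong$^\ast$-convergent net is Mackey-convergent. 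With this lemma supplied, your proof of the product rule closes; without it, the step is a genuine gap.
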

\begin{proof}
If $a\in\widehat\AA^\delta$ and $(a_j)$ is a net in $\AA$ such that $a_j\to a$ in the $\delta$-topology, then
\begin{equation*}
\bar\U_t\bar\delta(a)=\lim_j \U_t \delta(a_j)=\delta(U_t a_j)=\bar\delta(U_t a).
\end{equation*}
It follows that $t\mapsto \bar\U_t\bar\delta(a)$ has the entire continuation $z\mapsto \bar\delta(U_z a)$, which implies $\bar\delta(a)\in\bar\H^{\mathrm{a}}$. Again, routine computations show that the restriction of $\bar\delta$ to $\widehat{\AA}^\delta$ is a symmetric derivation from $\widehat\AA^\delta$ to $\H^{\mathrm{a}}$.
\end{proof}

\section{Closability of derivations}\label{sec:closable}

In this section we give a simple criterion for the closability of derivations inspired by a well-known result (see \cite[Section 4]{Voi98} and \cite[Lemma 3.9]{Nel17} for the non-tracial case) on the closability of the derivation used in free probability.

If $\AA$ is a Tomita algebra and $\H$ is a Tomita bimodule over $\AA$, we say that $\xi\in\H$ is a \emph{bounded vector} if there exists $C>0$ such that $\norm{a\xi b}\leq C\norm{a}\norm{b}$ for all $a,b\in \AA$. In this case, the maps $a\mapsto a\xi$ and $b\mapsto \xi b$ extend to bounded linear operators from the completion $\HH$ of $\AA$ to $\H$, which we denote by $R(\xi)$ and $L(\xi)$, respectively.

\begin{lemma}
Let $\AA$ be a Tomita algebra, $\H$ a normal Tomita bimodule over $\AA$ and $\delta\colon\AA\to\H$ a derivation. If $\delta(\AA)$ is contained in the space of bounded vectors, then $\dom(\delta^\ast)$ is a subbimodule of $\H$ and
\begin{equation*}
\delta^\ast(a\xi b)=a^\ast\delta^\ast(\xi)b-L(\delta(a^\ast))^\ast(\xi b)-R(\delta(b^\ast))^\ast (a\xi)
\end{equation*}
for $a,b\in\AA$ and $\xi\in\dom(\delta^\ast)$.
\end{lemma}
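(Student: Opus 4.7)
The plan is to verify directly that, for fixed $\xi \in \dom(\delta^\ast)$ and $a,b \in \AA$, the linear functional $c \mapsto \langle \delta(c), a\xi b\rangle_\H$ on $\AA$ is bounded with respect to $\norm{\cdot}_\HH$, and to identify an explicit element of $\HH$ representing it; this simultaneously yields $a\xi b \in \dom(\delta^\ast)$ (so $\dom(\delta^\ast)$ is a subbimodule) and the stated formula.

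First I would use the Tomita bimodule identity to transfer the bimodule action across the inner product, obtaining $\langle \delta(c), a\xi b\rangle_\H = \langle a^\sharp \delta(c) b^\flat, \xi\rangle_\H$. Since $a^\sharp c b^\flat \in \AA$, two applications of the Leibniz rule yield the algebraic identity
\begin{equation*}
a^\sharp \delta(c) b^\flat = \delta(a^\sharp c b^\flat) - \delta(a^\sharp)\,c b^\flat - a^\sharp c\,\delta(b^\flat),
\end{equation*}
and the inner product with $\xi$ splits into three pieces that I handle separately. For the first, $\xi \in \dom(\delta^\ast)$ together with the Hilbert-algebra relations $\langle a^\sharp x, y\rangle_\HH = \langle x, a y\rangle_\HH$ and $\langle x y^\flat, z\rangle_\HH = \langle x, z y\rangle_\HH$ gives $\langle \delta(a^\sharp c b^\flat), \xi\rangle_\H = \langle c, a\,\delta^\ast(\xi)\,b\rangle_\HH$. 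For the remaining two, the bounded-vector hypothesis makes $L(\delta(a^\sharp))$ and $R(\delta(b^\flat))$ extend to bounded operators $\HH \to \bar\H$, and the bimodule relations combined with the continuous extension of the left and right actions of $\AA$ to $\bar\H$ (provided by normality of $\H$) give
\begin{equation*}
\langle \delta(a^\sharp) c b^\flat, \xi\rangle_\H = \langle L(\delta(a^\sharp)) c, \xi b\rangle_{\bar\H} = \langle c, L(\delta(a^\sharp))^\ast(\xi b)\rangle_\HH,
\end{equation*}
and analogously $\langle a^\sharp c\,\delta(b^\flat), \xi\rangle_\H = \langle c, R(\delta(b^\flat))^\ast(a\xi)\rangle_\HH$.

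Collecting the three contributions with the signs from the Leibniz identity exhibits $c \mapsto \langle \delta(c), a\xi b\rangle_\H$ as $\langle c, \eta\rangle_\HH$ for an $\eta \in \HH$ matching the right-hand side of the claimed formula, which proves both that $a\xi b \in \dom(\delta^\ast)$ and the identity for $\delta^\ast(a\xi b)$. The only delicate point is ensuring that $\xi b, a\xi \in \bar\H$ so that the adjoints $L(\delta(a^\sharp))^\ast, R(\delta(b^\flat))^\ast\colon \bar\H \to \HH$ can be applied, and this is exactly what normality of the Tomita bimodule provides via the extended correspondence structure on $\bar\H$. Beyond this, the argument is bookkeeping combining the product rule with the Hilbert-algebra adjoint relations, and I do not foresee a genuine obstacle.
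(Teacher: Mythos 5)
Your proposal is correct and follows essentially the same route as the paper's proof: both transfer the bimodule actions across the inner product via $\langle a\xi b,\eta\rangle=\langle\xi,a^\sharp\eta b^\flat\rangle$, expand $a^\sharp\delta(c)b^\flat$ by the product rule, and identify the three resulting terms through the adjoints of $\pi_l$, $\pi_r$, $L$ and $R$; you are merely more explicit about the individual Hilbert-algebra adjoint steps and about why $\xi b$ and $a\xi$ lie in $\bar\H$. (Incidentally, your first term $a\delta^\ast(\xi)b$ agrees with the paper's own computation; the $a^\ast$ in the displayed statement appears to be a typo.)
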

\begin{proof}
Let $a,b,c\in \AA$ and $\xi\in\dom(\delta^\ast)$. By the product rule,
\begin{align*}
\langle a\xi b,\delta(c)\rangle&=\langle \xi,a^\ast\delta(c)b^\ast\rangle\\
&=\langle \xi,\delta(a^\ast c b^\ast)-\delta(a^\ast)cb^\ast-a^\ast c\delta(b^\ast)\rangle\\
&=\langle a\delta^\ast(\xi)b-L(\delta(a^\ast))^\ast(\xi b)-R(\delta(b^\ast))^\ast (a\xi),c\rangle.
\end{align*}
Thus $a\xi b\in\dom(\delta^\ast)$ and the claimed identity for $\delta^\ast(a\xi b)$ holds.
\end{proof}

\begin{lemma}
Let $\AA$ be a Tomita algebra, $\H$ a normal Tomita bimodule over $\H$ and $\delta\colon\AA\to\H$ a derivation. If $\delta(\AA)$ is contained in the space of bounded vectors and $\dom(\delta^\ast)$ is a cyclic subset, then $\delta$ is closable.
\end{lemma}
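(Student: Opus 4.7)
The plan is to invoke the standard closability criterion: a densely defined operator between Hilbert spaces is closable if and only if its adjoint is densely defined. So I aim to show that $\dom(\delta^\ast)$ is dense in $\H$.

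First I would unpack the cyclicity hypothesis. A subset $S\subset\H$ being cyclic means that the bimodule span $\mathrm{lin}\{a\xi b\mid a,b\in\AA,\,\xi\in S\}$ is dense in $\H$ (the proof won't need more than this). Applying the previous lemma, which assumes exactly that $\delta(\AA)$ consists of bounded vectors, we know that $\dom(\delta^\ast)$ is already a sub-bimodule of $\H$. Hence $\dom(\delta^\ast)\supset \mathrm{lin}\{a\xi b\mid a,b\in\AA,\,\xi\in\dom(\delta^\ast)\}$, and by cyclicity of $\dom(\delta^\ast)$ this latter set is dense in $\H$. Therefore $\dom(\delta^\ast)$ is dense.

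Second, I would run the usual adjoint argument. Suppose $(a_n)$ is a sequence in $\AA$ with $a_n\to 0$ in $\HH$ and $\delta(a_n)\to \eta$ in $\H$. Then for every $\zeta\in\dom(\delta^\ast)$,
\begin{equation*}
\langle\eta,\zeta\rangle_\H=\lim_n\langle\delta(a_n),\zeta\rangle_\H=\lim_n\langle a_n,\delta^\ast(\zeta)\rangle_\HH=0.
\end{equation*}
Since $\dom(\delta^\ast)$ is dense in $\H$, this forces $\eta=0$, which is precisely the definition of closability of $\delta$.

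Both steps are essentially automatic given the previous lemma and the standard Hilbert space theory, so I do not anticipate any serious obstacle; the only subtlety is the correct interpretation of \emph{cyclic subset} as generating the bimodule densely, which makes the bimodule invariance statement of the previous lemma immediately useful. If the paper instead meant \emph{algebraically cyclic} (single generator), the same argument applies verbatim, since the previous lemma guarantees stability under the left and right $\AA$-actions.
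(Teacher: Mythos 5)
Your argument is correct and is essentially the paper's own proof: both invoke the preceding lemma to conclude that $\dom(\delta^\ast)$ is a sub-bimodule, deduce density from cyclicity, and then apply the standard fact that a densely defined adjoint forces closability. The only difference is that you spell out the last step explicitly, which the paper leaves implicit.
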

\begin{proof}
By the previous lemma, $\dom(\delta^\ast)$ is a subbimodule of $\H$. Hence, if $\dom(\delta^\ast)$ is cyclic, then it is dense in $\H$. Therefore, $\delta$ is closable.
\end{proof}

\section{Completely Dirichlet forms associated with closable derivations}\label{sec:proof_main}

In this section we prove the main theorem of this article, Theorem \ref{thm:main}, showing that the closure of the quadratic form associated with a closable symmetric derivation is a modular completely Dirichlet form.

As mentioned in the introduction, we rely on Haagerup's reduction method. To set up the stage for its use, we first discuss crossed products of Tomita algebras and Tomita bimodules. To extend closable symmetric derivations to a sufficiently large domains on the crossed product, we use the $\delta$-completion technique developed in Section \ref{sec:delta_top}. Further, to reduce the problem to the tracial case, we need a ``change of reference weight'' argument and an analysis of approximation properties of completely Dirichlet forms. This will be dealt with in the following lemmas. Finally, in Proposition \ref{prop:rel_deriv_diff_calc} we discuss the relation between the derivation we started with and the first-order differential structure of the associated completely Dirichlet form.

Let $\AA$ be a Tomita algebra, $\H$ a normal Tomita bimodule over $\AA$ and $\delta\colon \AA\to \H$ a closable symmetric derivation. Throughout this section we endow $\AA$ with the $\delta$-topology and $\H$ with the projective topology induced by the maps $\H\to\bar\H,\,\xi\mapsto\bar\U_{in}\xi$ for $n\in \IZ$, and we assume that $\AA$ and $\H$ are complete in these topologies. As discussed in Section \ref{sec:delta_top}, this can always be achieved by passing to the completions.

Let $G$ be a countable subgroup of $\IR$, viewed as discrete group. The vector space $C_c(G;\AA)\cong C_c(G)\odot \AA$ can be made into a Tomita algebra by the operations
\begin{align*}
(a\ast b)(g)&=\sum_{h\in G}U_{-h}a(g-h)b(h),\\
a^\sharp(g)&=U_{-g}(a(-g)^\sharp),\\
(U_z a)(g)&=U_z a(g).
\end{align*}
Moreover, the vector space $C_c(G;\H)$ becomes a normal Tomita bimodule over $C_c(G;\AA)$ with the operations 
\begin{align*}
(a\xi)(g)&=\sum_{h\in G}U_{-h} a(g-h)\xi(h)\\
(\xi b)(g)&=\sum_{h\in G}\U_{-h}\xi(g-h)b(h)\\
(\tilde\J\xi)(g)&=\U_{-g}\J\xi(-g)\\
(\tilde \U_z \xi)(g)&=\U_z \xi(g).
\end{align*}
Furthermore, $1_{C_c(G)}\odot \delta\colon C_c(G;\AA)\to C_c(G;\H)$ is a closable symmetric derivation, whose closure we denote by $1\otimes\bar\delta$.

We write $\tilde \AA$ for the $(1\odot\delta)$-completion of $C_c(G;\AA)$, $\tilde\H$ for $C_c(G;\H)^{\mathrm{a}}$ and $\tilde\delta$ for the restriction of $1\otimes\bar\delta$ to $\tilde\AA$. By Lemma \ref{lem:completion_deriv} the map $\tilde\delta$ is a (closable) symmetric derivation from $\tilde\AA$ to $\tilde\H$.

\begin{lemma}\label{lem:completion_crossed_prod}
If $x\in L(G)\otimes \IC 1_{\bar\H}$ and $a\in \tilde\AA$, then $x a, a x\in \tilde\AA$ and $\tilde\delta(x a)=x\tilde\delta(a)$, $\tilde\delta(a x)=\tilde\delta(a)x$.
\end{lemma}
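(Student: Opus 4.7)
The plan is to first verify the identities for $x$ in the algebraic span of the group unitaries $\{\lambda_g\mid g\in G\}$ and then extend to all of $L(G)\otimes\IC 1_{\bar\H}$ by an approximation argument exploiting the $\delta$-completeness of $\tilde\AA$.

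For $x=\lambda_g$ and $a\in C_c(G;\AA)$, a direct computation from the crossed product multiplication yields explicit formulas showing that both $\lambda_g a$ (a shift in the $G$-variable) and $a\lambda_g$ (a shift twisted by $U_{-g}$) remain in $C_c(G;\AA)$. Since $1\otimes\bar\delta$ acts componentwise in the $G$-variable and $\delta$ is symmetric (so intertwines $U_z$ and $\U_z$), one verifies immediately that $\tilde\delta(\lambda_g a)=\lambda_g\tilde\delta(a)$ and $\tilde\delta(a\lambda_g)=\tilde\delta(a)\lambda_g$. By linearity, the same holds for any $x$ in the algebraic span of $\{\lambda_g\}$. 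The componentwise-vs-shift picture also shows that every such $x$ commutes with $\Delta^m$ on $\HH$ and with the extension of $\tilde\U_{im}$ to $\bar{\tilde\H}$ for every $m\in\IZ$, and that $\pi_l(xa)=x\pi_l(a)$ and $\pi_l(ax)=\pi_l(a)x$.

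For general $x\in L(G)\otimes\IC 1_{\bar\H}$, I would invoke Kaplansky density to pick a net $(x_n)$ in the algebraic span of $\{\lambda_g\}$ with $\norm{x_n}\leq\norm{x}$ and $x_n\to x$ in the strong$^*$ topology; the commutation relations of the preceding paragraph then pass to $x$ by strong-limit arguments. Fix $a\in C_c(G;\AA)$. Using the commutation relations one has, for each $m\in\IZ$,
\begin{equation*}
\Delta^m(x_n a)=x_n\Delta^m a,\quad\tilde\delta(\Delta^m(x_n a))=x_n\bar\delta(\Delta^m a),\quad\pi_l(\Delta^m(x_n a))=x_n\pi_l(\Delta^m a),
\end{equation*}
and each of these expressions converges in the appropriate topology: the first in $\HH$, the second in $\tilde\H$ (after applying $\bar{\tilde\U}_{ik}$ and using that $\bar{\tilde\U}_{ik}$ commutes with $x_n$), and the third in the Mackey topology of $B(\HH)$, exploiting that Mackey agrees with strong$^*$ on uniformly bounded nets. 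Hence $(x_n a)$ is Cauchy in the $\delta$-topology, and by completeness of $\tilde\AA$ one obtains $xa\in\tilde\AA$ with $\tilde\delta(xa)=x\tilde\delta(a)$. The extension from $C_c(G;\AA)$ to $\tilde\AA$ and the right-multiplication case are handled in parallel, using that $b\mapsto xb$ (respectively $b\mapsto bx$) is $\delta$-continuous on $C_c(G;\AA)$ and that the right action of $L(G)\otimes\IC 1_{\bar\H}$, which sits in the commutant, satisfies the analogous commutation relations.

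The main technical point is the simultaneous bookkeeping of the four convergences built into the definition of the $\delta$-topology. Once the commutation of $L(G)\otimes\IC 1_{\bar\H}$ with the modular and differential structure on the crossed product is in place, each of them reduces to a standard statement about strong$^*$ approximation of uniformly bounded operator nets.
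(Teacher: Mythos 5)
Your proposal is correct and follows essentially the same route as the paper: approximate $x=y\otimes 1$ by a bounded net from $\IC[G]\otimes 1$ converging strong$^\ast$ (hence in the Mackey topology on bounded sets), verify the commutation relations with $\Delta^n$, $1\odot\delta$ and $\pi_l$ on $C_c(G;\AA)$, deduce Cauchyness in the $\tilde\delta$-topology, and then pass from $C_c(G;\AA)$ to all of $\tilde\AA$ by a second approximation step. The paper's proof is exactly this two-stage argument, so no changes are needed.
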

\begin{proof}
Let $x=y\otimes 1$ with $y\in L(G)$, let $(y_i)$ be a bounded net in $\IC[G]$ such that $y_i\to y$ in the strong$^\ast$ topology and let $x_i=y_i\otimes 1$. Clearly, $x_i\to x$ in the Mackey topology.

If $a\in C_c(G;\AA)$, then $x_i a\in C_c(G;\AA)$ and $\Delta^n(x_i a)=x_i \Delta^n a$, $(1\odot\delta)(x_i a)=x_i(1\odot \delta)(a)$, $\pi_l(\Delta^n (x_i a))=x_i\pi_l(\Delta^n a)$. It follows that $x_i a\to xa$ in $\ell^2(G;\HH)$, the net $(x_i a)$ is Cauchy in the $\tilde\delta$-topology and $(1\odot\delta)(x_i a)\to x(1\odot \delta)(a)$. Thus $xa\in \tilde\AA$ and $\tilde\delta(xa)=x\tilde\delta(a)$.

A similar argument shows that if $(a_j)$ is a Cauchy net in $C_c(G;\AA)$ with respect to $\mathcal T_{\tilde\delta}$ and $a_j\to a$ in $\ell^2(G;\H)$, then $(x a_j)$ is Cauchy with respect to $\mathcal T_{\tilde\delta}$ and $xa_j\to xa$ in $\ell^2(G;\H)$. Hence if $a\in \tilde\AA$, then $xa\in \tilde\AA$ and $\tilde\delta(xa)=x\tilde\delta(a)$. The statement for $ax$ can be proven analogously.
\end{proof}

For the next lemma recall that $\AA_\phi=\Lambda_\phi(\n_\phi\cap \n_\phi^\ast)$ is the full left Hilbert algebra induced by the weight $\phi$, the cone $C_\phi$ is the closure of $\{\Delta_\phi^{1/4}a\mid a\in\AA_\phi,0\leq \pi_l(a)\leq 1\}$, and $M_\phi$ denotes the centralizer of $\phi$.

\begin{lemma}\label{lem:change_weight}
Let $M$ be a von Neumann algebra, $\phi$ a normal semi-finite faithful weight on $M$ and $x\in M_\phi$ be positive and invertible. Let $\psi=\phi(x^{1/2}\cdot x^{1/2})$.

If $\E$ is a modular (completely) Dirichlet form on $L^2(M)$ with respect to $\AA_\phi$, $x\dom(\E)\subset\dom(\E)$ and $\E(xa,b)=\E(a,xb)$ for all $a,b\in\dom(\E)$, then $\E$ is also a modular (completely) Dirichlet from with respect to $\AA_{\psi}$.
\end{lemma}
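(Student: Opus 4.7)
The plan is to work at the level of the associated quantum dynamical semigroup instead of verifying the Dirichlet axioms for $\AA_\psi$ directly. Let $(T_t)$ denote the strongly continuous semigroup on $L^2(M)$ associated with $\E$, with self-adjoint generator $L\ge 0$. By hypothesis, $(T_t)$ is the $\phi$-GNS implementation of a $\phi$-GNS-symmetric quantum dynamical semigroup $(P_t)$ of contractive (completely) positive maps on $M$. I will show that the \emph{same} family $(P_t)$ is $\psi$-GNS-symmetric and that $(T_t)$ is its $\psi$-GNS implementation; by the correspondence recalled in Section \ref{sec:basics} this proves that $\E$ is a modular (completely) Dirichlet form with respect to $\AA_\psi$. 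Since $L^2(M)$ is fixed in its canonical standard form, the modular conjugations $J_\phi$ and $J_\psi$ coincide, so no separate $J$-invariance check is needed.

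The assumptions $x\dom(\E)\subseteq\dom(\E)$ and $\E(xa,b)=\E(a,xb)$ mean that the bounded self-adjoint operator $L_x$ on $L^2(M)$ commutes with the form, hence with $L$ and with $(T_t)$. Translating via the $\phi$-GNS identification, this yields $P_t(xy)=xP_t(y)$ for $y\in\n_\phi$. The crucial step is to transfer this to right multiplication: the $J$-invariance $\E\circ J=\E$ yields $JLJ=L$, and since $x=x^*$, $R_x=JL_xJ$ in the standard form. Therefore $R_xL=LR_x$ and $R_xT_t=T_tR_x$. As right multiplication on $L^2(M)$ satisfies $R_x\Lambda_\phi(y)=\Lambda_\phi(yx)$, we obtain $P_t(yx)=P_t(y)x$.

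With both intertwining identities in hand, the remainder is a direct computation. The KMS condition applied to $x^{1/2}\in M_\phi$ gives $\phi(x^{1/2}ax^{1/2})=\phi(ax)=\phi(xa)$ whenever well defined, so combining with the $\phi$-GNS symmetry of $(P_t)$,
\begin{equation*}
\psi(P_t(y)^*z)=\phi(P_t(y)^*zx)=\phi(y^*P_t(zx))=\phi(y^*P_t(z)x)=\psi(y^*P_t(z)),
\end{equation*}
and an analogous manipulation shows $\psi\circ P_t\le\psi$. Hence $(P_t)$ is $\psi$-GNS symmetric. Moreover, because $x\in M_\phi$, the standard form identifications give $\Lambda_\psi(y)=\Lambda_\phi(yx^{1/2})=R_{x^{1/2}}\Lambda_\phi(y)$ on the common dense subspace $\n_\phi\cap\n_\psi$, and $R_{x^{1/2}}$ commutes with $T_t$ by the argument used for $R_x$. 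This gives $T_t\Lambda_\psi(y)=\Lambda_\psi(P_ty)$, so $(T_t)$ is the $\psi$-GNS implementation of $(P_t)$. The completely Dirichlet case reduces immediately to the Dirichlet case for the matrix amplifications $\E^{(n)}$ with $x\otimes 1_n$ in the centralizer of $\phi\otimes\operatorname{tr}$.

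I expect the main obstacle to be the passage from the left-multiplication symmetry $\E(xa,b)=\E(a,xb)$ to the analogous commutation for right multiplication; this is the only step that genuinely uses both the $J$-invariance of $\E$ and the standard form identity $R_x=JL_xJ$. Minor domain-theoretic subtleties in moving between $L^2$-identities and weight-level identities are handled by standard tools, in particular the Kadison--Schwarz inequality for the CP contractions $P_t$, which guarantees $P_t(\n_\psi)\subseteq\n_\psi$ once $\psi\circ P_t\le\psi$ is known.
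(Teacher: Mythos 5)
Your proof is correct, but it takes a genuinely different route in its second half. Both arguments share the same computational core: the hypotheses $x\dom(\E)\subset\dom(\E)$ and $\E(xa,b)=\E(a,xb)$ force left multiplication by $x$ to commute strongly with the generator, hence $T_t(xa)=xT_t(a)$; the $J$-invariance of $\E$ together with the fact that $JxJ$ is right multiplication by $x$ (as $x=x^\ast\in M_\phi$) gives $T_t(ax)=T_t(a)x$; and both use $\n_\psi=\n_\phi$ with $\Lambda_\psi(y)=\Lambda_\phi(y)x^{1/2}$. From there the paper stays entirely on the Hilbert space: it identifies the cone $C_\psi=x^{1/4}C_\phi x^{1/4}$ and verifies the invariance $T_t(C_\psi)\subset C_\psi$ directly from $T_t(a)=x^{1/4}T_t(x^{-1/4}ax^{-1/4})x^{1/4}$, then invokes \cite[Theorem 5.7]{GL99} once. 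You instead climb up to $M$, check that the unchanged semigroup $(P_t)$ is $\psi$-GNS-symmetric via the centralizer identity $\phi(x^{1/2}\,\cdot\,x^{1/2})=\phi(\,\cdot\,x)$, and come back down through the correspondence in the direction \enquote{semigroup implies form}. The paper's version buys brevity and avoids the weight-level bookkeeping that your version needs but only gestures at: $zx\in\n_\phi$, $P_t(\n_\psi)\subset\n_\psi$, and above all the inequality $\psi\circ P_t\le\psi$, which is not quite \enquote{analogous} --- it requires $\psi(P_t(y))=\phi(P_t(x^{1/2}yx^{1/2}))$ and hence the additional observation that left and right multiplication by $x^{1/2}$, being the positive square roots of the corresponding operators for $x$, also commute with $T_t$. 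Your version buys conceptual transparency: it makes explicit that the semigroup of maps on $M$ does not change at all and only the reference weight is transported, and it sidesteps the explicit description of $C_\psi$. Note also that for the plain (not completely) Dirichlet case your reduction needs the Goldstein--Lindsay correspondence for merely positive contractive semigroups, which is available but should be cited in that form.
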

\begin{proof}
Since $x$ is invertible, the weight $\psi$ is faithful and $J_\psi=J_\phi$, and since $x$ commutes with $(\Delta_\phi^{it})$, we have $\Delta_{\psi}^{it}=x^{it}\Delta_\phi^{it}(\cdot)x^{-it}=\Delta_\phi^{it}(x^{it}\cdot x^{-it})$.

Let $A$ be the positive self-adjoint operator associated with $\E$ and $T_t=e^{-tA}$. The commutation relation $x\dom(\E)\subset \dom(\E)$ and $\E(xa,b)=\E(a,xb)$ for $a,b\in \dom(\E)$ implies that $x$ commutes strongly with $A^{1/2}$. Hence $T_t(xa)=x T_t(a)$ for all $a\in H$ and $t\geq 0$. Since $T_t$ commutes with $J_\phi$, we also have $T_t(ax)=T_t(a)x$ for $a\in H$, $t\geq 0$. In particular, $(T_t)$ and $(\Delta_{\psi}^{is})$ commute.

Moreover, $C_\psi=x^{1/4}C_\phi x^{1/4}$. Indeed, a direct computation shows that $\n_\psi=\n_\phi$ and $\Lambda_\psi(y)=\Lambda_\phi(y)x^{1/2}$ for $y\in \n_\phi$. Hence, if $y\in\n_\phi$ with $0\leq y\leq 1$, then 
\begin{equation*}
\Delta_\psi^{1/4}\Lambda_\psi(y)=x^{1/4}\Delta_\phi^{1/4}\Delta_\phi(y)x^{1/4}\in x^{1/4}C_\phi x^{1/4}.
\end{equation*}
The converse inclusion follows by swapping the roles of $\phi$ and $\psi$.

Therefore, if $a\in C_\psi$, then 
\begin{equation*}
T_t(a)=x^{1/4}T_t(x^{-1/4}a x^{-1/4})x^{1/4}\in C_\psi.
\end{equation*}
Thus $\E$ is a Dirichlet form with respect to $\AA_\psi$ by \cite[Theorem 5.7]{GL99}. The result for completely Dirichlet forms follows easily by applying the same argument to the forms $\E^{(n)}$ on $L^2(M\otimes M_n(\IC))$.
\end{proof}

\begin{lemma}\label{lem:Dirichlet_approx}
Let $M$ be a von Neumann algebra and $\phi$ a normal semi-finite faithful weight on $M$. Let $(M_n)$ be an increasing sequence of von Neumann subalgebras with weak$^\ast$ dense union and assume that $M_n$ is the range of a $\phi$-preserving conditional expectation $E_n$ on $M$. Let $H_n$ denote the closure of $\Lambda_{\phi}(\n_{\phi}\cap M_n)$ and let $P_n$ denote the orthogonal projection from $H$ to $H_n$.

If $\E$ is a closed densely defined quadratic form on $H$ such that for every $n\in \IN$ the quadratic form $\E\vert_{H_n}$ is a Dirichlet form with respect to $\Lambda_\phi(\n_\phi\cap \n_\phi^\ast\cap M_n)$ and $\E\circ P_n\leq\E$, then $\E$ is a Dirichlet form with respect to $\AA_\phi$.
\end{lemma}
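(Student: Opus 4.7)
The plan is to verify the two defining conditions of a Dirichlet form with respect to $\AA_\phi$: (i) $\E\circ J_\phi=\E$, and (ii) $\E(P_C a)\leq \E(a)$ for every $a\in H$ with $J_\phi a=a$, where $C$ is the cone associated with $\AA_\phi$. Both properties will be transferred from the subalgebras $M_n$ via the projections $P_n$. The basic inputs, which I would invoke as standard consequences of $E_n$ being $\phi$-preserving, are: $P_n\to\id$ strongly on $H$ (by the weak$^\ast$ density of $\bigcup M_n$); $P_n$ commutes with $J_\phi$ and with the modular group (so $H_n$ is $J_\phi$-invariant and $C_n\subset C$); and $P_n$ carries the cone $C$ into the cone $C_n$ associated with $\Lambda_\phi(\n_\phi\cap\n_\phi^\ast\cap M_n)$. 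I would also use the standard fact that a closed quadratic form on a Hilbert space is weakly lower semi-continuous.

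For (i), take $a\in\dom(\E)$. Since $\E|_{H_n}$ is a Dirichlet form, $\E(J_\phi P_n a)=\E(P_n a)$, and by hypothesis $\E(P_n a)\leq \E(a)$. Using $J_\phi P_n=P_n J_\phi$, one has $P_n J_\phi a\to J_\phi a$ in $H$, so weak lower semi-continuity yields $\E(J_\phi a)\leq \E(a)$, with equality by symmetry.

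For (ii), let $a\in\dom(\E)$ with $J_\phi a=a$ (otherwise the inequality is trivial). The Dirichlet property of $\E|_{H_n}$ applied to the $J_\phi$-fixed vector $P_n a\in H_n$ gives $\E(P_{C_n}(P_n a))\leq \E(P_n a)\leq \E(a)$. Testing the minimization property defining $P_{C_n}(P_n a)$ against $P_n P_C a\in C_n$ (where the inclusion $P_n C\subset C_n$ is used crucially) produces
\begin{equation*}
\norm{P_{C_n}(P_n a)-P_n a}\leq \norm{P_n P_C a-P_n a}\leq \norm{P_C a-a},
\end{equation*}
so the sequence $(P_{C_n}(P_n a))$ is norm-bounded. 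Any weak limit point $b$ lies in the closed convex cone $C$ (since each $C_n\subset C$), and $\norm{b-a}\leq \norm{P_C a-a}$ follows from $P_n a\to a$ together with weak lower semi-continuity of the norm; uniqueness of the metric projection onto $C$ forces $b=P_C a$. Hence $P_{C_n}(P_n a)\rightharpoonup P_C a$ weakly, and weak lower semi-continuity of $\E$ closes the estimate
\begin{equation*}
\E(P_C a)\leq \liminf_n \E(P_{C_n}(P_n a))\leq \E(a).
\end{equation*}

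The decisive point is the cone inclusion $P_n C\subset C_n$: it allows $P_n P_C a$ to serve as a competitor in the definition of $P_{C_n}(P_n a)$ and thereby pins down the weak limit as $P_C a$. Without this compatibility between the subalgebra cones and the global cone, one would have norm boundedness of $P_{C_n}(P_n a)$ but no way to identify its weak limit, and the Dirichlet property at the subalgebra level could not be passed up to $\E$.
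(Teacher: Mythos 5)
Your proof is correct, but it takes a genuinely different route from the paper. The paper verifies the Dirichlet property through the semigroup characterization: from $\E\circ P_n\leq\E$ and Ouhabaz's invariance criterion it deduces $T_t(H_n)\subset H_n$, identifies $(T_tP_n)$ as the semigroup of $\E\vert_{H_n}$, concludes $T_tP_nJ_\phi=J_\phi T_tP_n$ and $T_tP_n(C_\phi)\subset P_n(C_\phi)\subset C_\phi$, and then lets $P_n\to 1$ strongly. You instead work directly with the form-level definition, transferring $\E\circ J_\phi=\E$ by norm convergence and lower semicontinuity, and handling the cone contraction by showing $P_{C_n}(P_na)\rightharpoonup P_{C}a$ weakly via the competitor $P_nP_Ca\in C_n$ and uniqueness of the metric projection, then closing with weak lower semicontinuity of the closed form. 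Both arguments rest on the same structural facts, which the paper verifies explicitly and you assert as standard: $P_n\Lambda_\phi(x)=\Lambda_\phi(E_n(x))$, commutation of $P_n$ with $J_\phi$ and $(\Delta_\phi^{it})$, the identification of $P_n(C_\phi)$ with the cone of the subalgebra (giving both $P_nC_\phi\subset C_n$ and $C_n\subset C_\phi$), and $P_n\to1$ strongly. What the paper's route buys is brevity, at the cost of invoking the semigroup characterization of Dirichlet forms from Goldstein--Lindsay and Ouhabaz's theorem; what your route buys is a self-contained argument at the level of the quadratic form that never mentions the semigroup, at the cost of the slightly more delicate weak-limit identification for the metric projections. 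Your closing remark correctly isolates the decisive point: without $P_nC_\phi\subset C_n$ one could not identify the weak limit, and indeed the paper's proof needs the very same cone compatibility to conclude $T_tP_n(C_\phi)\subset C_\phi$.
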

\begin{proof}
Let $(T_t)$ be the strongly continuous semigroup associated with $\E$. Since $\E\circ P_n\leq \E$, we have $T_t(H_n)\subset H_n$ by Ouhabaz' theorem \cite[Corollary 2.4]{Ouh96}. Thus $T_t$ commutes with $P_n$, and it is easy to see that $(T_t P_n)$ is the semigroup associated with $\E\vert_{H_n}$, viewed as semigroup on $H$. In particular, $T_t P_n J_\phi=J_\phi T_t P_n$. In the limit we obtain $T_t J_\phi=J_\phi T_t$.

It remains to show that $T_t(C_\phi)\subset C_\phi$ for all $t\geq 0$. A direct computation shows that $E_n$ and $P_n$ are related by $P_n\Lambda_\phi(x)=\Lambda_\phi(E_n(x))$. Moreover, $E_n$ is GNS-symmetric with respect to $\phi$, which implies that $P_n$ commutes with $(\Delta_\phi^{it})$. Thus $P_n(C_\phi)$ is the closure of $\{\Delta_\phi^{1/4}\Lambda_\phi(x)\mid x\in \n_\phi\cap\n_\phi^\ast\cap M_n,\,0\leq x\leq 1\}$. In particular, $P_n(C_\phi)\subset C_\phi$.

Since $\E_n$ is a Dirichlet form with respect to $\Lambda_\phi(\n_\phi\cap\n_\phi^\ast\cap M_n)$ and $(T_t P_n)$ is the associated semigroup, we have $T_t P_n(C_\phi)\subset P_n (C_\phi)$. Moreover, since $\bigcup_n M_n$ is weak$^\ast$ dense in $M$, we have $P_n\to 1$ strongly by Kaplansky's density theorem. Therefore, $T_t(C_\phi)\subset C_\phi$.
\end{proof}

To prove that the quadratic form associated with a closable symmetric derivation is a completely Dirichlet form, we will reduce the problem to the tracially symmetric case by means of Haagerup's reduction method. We only recall the necessary definitions here and refer to \cite{HJX10} for proofs in the case of states and to \cite{CPPR15} for the extension to weights.

Let $M=\pi_l(\AA)^{\prime\prime}$, let $\phi$ be the weight induced by the full left Hilbert algebra $\AA^{\prime\prime}$ on $M$, let $G=\bigcup_{n\in\IN}2^{-n}\IZ$, let  $\tilde M=M\rtimes_{\sigma^\phi}G=\pi_l(\tilde\AA)^{\prime\prime}$ and let $\tilde\phi$ be the dual weight of $\phi$ on $\tilde M$. Let $(a_n)$ be a sequence of self-adjoint elements of $L(G)\otimes\IC 1\subset\mathcal Z(\tilde M_{\tilde\phi})$, $\phi_n=\phi e^{-a_n}$, $M_n=\tilde M_{\phi_n}$ and $\tau_n=\phi_n|_{M_n}$. Here $N_\psi$ denotes the centralizer of the weight $\psi$ on $N$ and $\mathcal Z(M)$ is the center of the von Neumann algebra $N$.

By \cite[Theorem 8.1]{CPPR15} the sequence $(a_n)$ can be chosen such that
\begin{itemize}
\item $M_n$ is semi-finite with normal semi-finite faithful trace $\tau_n$,
\item for each $n\in \IN$ there exists a conditional expectation $E_n$ from $M$ onto $M_n$ such that $\tilde\phi\circ E_n=\tilde\phi$ and $\sigma^{\tilde\phi}_t\circ E_n=E_n\circ\sigma^{\tilde\phi}_t$ for all $t\in\IR$,
\item $E_n(x)\to x$ strongly$^\ast$ for every $x\in M$.
\end{itemize}
In the following we fix a sequence $(a_n)$ with these properties. The concrete construction is irrelevant for our purposes.

\begin{theorem}\label{thm:main}
Let $\AA$ be a Tomita algebra with completion $\HH$, let $\H$ be a normal Tomita bimodule over $\AA$ and $\delta\colon \AA\to \H$ a closable symmetric derivation. The closure $\E$ of the quadratic form
\begin{equation*}
\HH\to [0,\infty],\,a\mapsto \begin{cases}\norm{\delta(a)}_\H^2&\text{if }a\in\AA,\\\infty&\text{otherwise}\end{cases}
\end{equation*}
is a modular completely Dirichlet form with respect to $\AA^{\prime\prime}$. If moreover $\AA$ is unital, then $\E$ is a modular quantum Dirichlet form.
\end{theorem}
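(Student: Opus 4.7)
The strategy is to use Haagerup's reduction method to reduce to the tracial case, where the analogous statement (a closable symmetric derivation yields a completely Dirichlet form) is the theorem of Cipriani and Sauvageot. The plan proceeds in three stages: exploit the already-constructed lift $\tilde\delta\colon\tilde\AA\to\tilde\H$ on the crossed product $\tilde M = M\rtimes_{\sigma^\phi}G$; show that the associated form $\tilde\E$ on $L^2(\tilde M,\tilde\phi)$, defined as the closure of $a\mapsto\norm{\tilde\delta(a)}^2$ with core $\tilde\AA$, is a modular completely Dirichlet form by approximating $\tilde M$ by the semifinite subalgebras $M_n$; and finally descend to $M$ using the conditional expectation obtained by averaging the dual $G$-action.

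The key step is the tracial reduction for fixed $n$. The weight $\phi_n = \tilde\phi e^{-a_n}$ restricts to the trace $\tau_n$ on $M_n$, and since $a_n\in L(G)\otimes\IC 1$, Lemma \ref{lem:completion_crossed_prod} shows that left and right multiplication by $e^{\pm a_n/2}$ commute with $\tilde\delta$. Passing to closures, this commutation identifies the restriction of $\tilde\E$ to $L^2(M_n,\tau_n)$ with the form of a closable symmetric derivation on the tracial von Neumann algebra $(M_n,\tau_n)$; by the Cipriani--Sauvageot theorem, this restriction is a completely Dirichlet form with respect to $\tau_n$. Lemma \ref{lem:change_weight} applied with $x=e^{a_n}$ transfers this back to $\tilde\phi|_{M_n}$. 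Then Lemma \ref{lem:Dirichlet_approx} assembles the pieces into a completely Dirichlet form on $L^2(\tilde M,\tilde\phi)$; its hypothesis $\tilde\E\circ P_n\leq\tilde\E$ follows because the Haagerup conditional expectations $E_n$ can be expressed as weak$^\ast$ integrals against a $G$-action with which $\tilde\delta$ commutes by Lemma \ref{lem:completion_crossed_prod}.

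To descend to $M$, observe that the dual $G$-action on $\tilde M$ yields a $\tilde\phi$-preserving conditional expectation onto $M$; the corresponding orthogonal projection $P$ from $L^2(\tilde M,\tilde\phi)$ onto $L^2(M,\phi)$ commutes with the semigroup of $\tilde\E$ (again via Lemma \ref{lem:completion_crossed_prod}), so Ouhabaz's theorem lets $\tilde\E$ restrict to a completely Dirichlet form on $L^2(M,\phi)$ with respect to $\AA^{\prime\prime}$, and direct computation on $\AA\subset\tilde\AA$ identifies this restriction with $\E$. Modularity $\E\circ U_t=\E$ is immediate from the symmetry relation $\delta\circ U_t=\U_t\circ\delta$ together with the isometry of $\U_t$ for real $t$. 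In the unital case, the product rule forces $\delta(1)=0$, hence $\E(1)=0$, which characterizes $\E$ as a quantum Dirichlet form.

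The main obstacle I anticipate is the tracial identification in the second paragraph: precisely showing that the restriction of $\tilde\E$ to $L^2(M_n,\tau_n)$ arises from a closable symmetric derivation in the tracial sense, with a sufficiently large core, requires combining the $\delta$-completion structure on $\tilde\AA$ with the $L(G)\otimes\IC 1$-module structure of Lemma \ref{lem:completion_crossed_prod} to produce a tracial-side derivation whose image lies in a suitable Hilbert bimodule over $M_n$.
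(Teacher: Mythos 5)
Your proposal follows essentially the same route as the paper's proof: lift $\delta$ to $\tilde\delta$ on the Haagerup crossed product, identify the restriction to each $L^2(M_n,\tau_n)$ with the form of a tracial symmetric derivation $\partial_n(x)=\tilde\delta(\Lambda_{\tilde\phi}(xe^{-a_n/2}))$ and invoke the tracial (Cipriani--Sauvageot) theorem, transfer back to $\tilde\phi$ via Lemma \ref{lem:change_weight} with $x=e^{a_n}$, assemble via Lemma \ref{lem:Dirichlet_approx} using commutation of the semigroup with $(\Delta_{\phi_n}^{it})$ to get $\tilde\E\circ P_n\leq\tilde\E$, and descend to $M$. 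The only cosmetic difference is the descent step, which the paper performs at the outset by observing $\tilde T_t=\id_{\ell^2(G)}\otimes T_t$ together with $\iota(C_{\AA^{\prime\prime}})=C_{\tilde\AA^{\prime\prime}}\cap\iota(\HH)$ rather than via a conditional expectation and Ouhabaz's theorem at the end; the technical obstacle you flag (building $\partial_n$ on a sufficiently large core $\mathcal A_n$ and verifying its product rule and closability) is indeed where the paper spends most of its effort.
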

\begin{proof}
We continue to use the notation from the previous discussion. The derivation $\tilde\delta\colon\tilde\AA\to\tilde\H$ is a restriction of $1\otimes \bar\delta$. Let $\tilde \E$ denote the closure of the quadratic form
\begin{equation*}
\ell^2(G;\HH)\to [0,\infty],\,a\mapsto\begin{cases}\norm{\tilde\delta(a)}_{\tilde\H}^2&\text{if }a\in \tilde\AA,\\\infty&\text{otherwise}.\end{cases}
\end{equation*}
It is clear that $\tilde \E(a)=\norm{(1\otimes\bar\delta)(a)}_{\ell^2(G;\bar\H)}^2$ for $a\in \dom(\tilde\E)$. Furthermore, $\dom(\tilde\E)=\dom(1\otimes\bar\delta)$ and the strongly continuous semigroups $(T_t)$ and $(\tilde T_t)$ associated with $\E$ and $\tilde \E$, respectively, are related by $\tilde T_t=\id_{\ell^2(G)}\otimes T_t$.

The map $\iota\colon \HH\to \ell^2(G;\HH),\,a\mapsto \1_0\otimes a$ is an isometric embedding such that $\iota(C_{\AA^{\prime\prime}})= C_{\tilde \AA^{\prime\prime}}\cap \iota(\HH)$. Thus, if $\tilde\E$ is a (completely) Dirichlet form with respect to $\tilde \AA^{\prime\prime}$, then $\E$ is a (completely) Dirichlet form with respect to $\AA^{\prime\prime}$.

Since $M$ is in standard form on $\HH$ and $\tilde M$ is in standard form on $\ell^2(G;\HH)$, these spaces can be canonically identified with $L^2(M,\phi)$ and $L^2(\tilde M,\tilde\phi)$, respectively, and we will tacitly do so in the following. Under these identifications, $\AA^{\prime\prime}=\AA_\phi$, $\tilde\AA^{\prime\prime}=\AA_{\tilde\phi}$ and $\Delta_{\tilde\phi}^{it}=\id_{\ell^2(G)}\otimes \Delta_\phi^{it}$.

Let
\begin{equation*}
\mathcal A_n=\{x\in \n_{\tilde\phi}\cap \n_{\tilde\phi}^\ast\cap M_n\mid \Lambda_{\tilde\phi}(x e^{-a_n/2})\in \tilde\AA\}.
\end{equation*}
Since $e^{a_n/2}\in \tilde M_{\tilde\phi}$, if $x\in\mathcal A_n$, then
\begin{equation*}
\Lambda_{\tilde\phi}(x)=\Lambda_{\tilde\phi}(x e^{-a_n/2})e^{a_n/2}\in \tilde\AA
\end{equation*}
by Lemma \ref{lem:completion_crossed_prod}. Reversing the roles of $e^{-a_n/2}$ and $e^{a_n/2}$ we get
\begin{equation*}
\mathcal A_n=\{x\in \n_{\tilde\phi}\cap\n_{\tilde\phi}^\ast\cap M_n\mid \Lambda_{\tilde\phi}(x)\in\tilde\AA\}.
\end{equation*}
Since $\tilde \AA$ is a Tomita algebra, it follows easily that $\mathcal A_n$ is a $\ast$-algebra. Define an $\mathcal A_n$-$\mathcal A_n$-bimodule structure on $\ell^2(G;\bar\H)$ by
\begin{equation*}
x\xi y=\Lambda_{\tilde\phi}(x)\cdot \xi\cdot \Lambda_{\tilde\phi}^\prime(y).
\end{equation*}
Using that $\tilde\H$ is a Tomita bimodule over $\tilde\AA$, it is not hard to see that this left and right action are contractive (anti-) $\ast$-homomorphisms. Moreover, $\tilde\J$ extends to an anti-unitary involution on $\ell^2(G;\bar\H)$ intertwining the left and right action. We still denote this extension by $\tilde\J$.

Let
\begin{equation*}
\partial_n\colon \mathcal A_n\to L^2(\tilde M,\tilde\phi),\,\partial_n(x)=\tilde \delta(\Lambda_{\tilde\phi}(xe^{-a_n/2})).
\end{equation*}
Since $e^{-a_n/2}\in \tilde M_{\tilde\phi}$ and $x\in \tilde M_{\phi_n}$, we have
\begin{align*}
\partial_n(x^\ast)&=\tilde\delta(\Lambda_{\tilde\phi}(x^\ast e^{-a_n/2}))\\
&=\tilde\delta(\Lambda_{\tilde\phi}^\prime( e^{-a_n/2}x^\ast))\\
&=\tilde\delta(\tilde J\Lambda_{\tilde\phi}(xe^{-a_n/2}))\\
&=\tilde\J\tilde\delta(\Lambda_{\tilde\phi}(xe^{-a_n/2}))\\
&=\tilde\J\partial_n(x).
\end{align*}
Moreover, it follows from Lemma \ref{lem:completion_crossed_prod} combined with $e^{-a_n/2}\in \tilde M_{\tilde\phi}$ and $x,y\in \tilde M_{\phi_n}$ that
\begin{align*}
\partial_n(xy)&=\tilde\delta(\Lambda_{\tilde\phi}(xye^{-a_n/2}))\\
&=\Lambda_{\tilde\phi}(x)\cdot \tilde\delta(\Lambda_{\tilde\phi}(ye^{-a_n/2}))+\tilde\delta(\Lambda_{\tilde\phi}(x))\cdot \Lambda_{\tilde\phi}(y e^{-a_n/2})\\
&=\Lambda_{\tilde\phi}(x)\cdot \tilde\delta(\Lambda_{\tilde\phi}(ye^{-a_n/2}))+\tilde\delta(\Lambda_{\tilde\phi}(xe^{-a_n/2}))\cdot \Lambda_{\tilde\phi}(e^{a_n/2} y e^{-a_n/2})\\
&=\Lambda_{\tilde\phi}(x)\cdot \tilde\delta(\Lambda_{\tilde\phi}(ye^{-a_n/2}))+\tilde\delta(\Lambda_{\tilde\phi}(xe^{-a_n/2}))\cdot \Lambda_{\tilde\phi}^\prime(\sigma^{\phi_n}_{-i/2}(y))\\
&=\Lambda_{\tilde\phi}(x)\cdot \tilde\delta(\Lambda_{\tilde\phi}(ye^{-a_n/2}))+\tilde\delta(\Lambda_{\tilde\phi}(xe^{-a_n/2}))\cdot \Lambda_{\tilde\phi}^\prime(y)\\
&=x\partial_n(y)+\partial_n(x)y.
\end{align*}
The operator $\partial_n$ is closable when viewed as operator in $L^2(M_n,\tau_n)$ since $\tilde\delta$ is closable and the map $\Lambda_{\tau_n}(x)\mapsto \Lambda_{\tilde\phi}(xe^{-a_n/2})$ extends to an isometry $\iota_n$ from $L^2(M_n,\tau_n)$ to $L^2(\tilde M,\tilde\phi)$.

Since $\tau_n$ is a trace, \cite[Theorem 4.5]{Cip08} implies that the closure $Q_n$ of the quadratic form
\begin{equation*}
L^2(M_n,\tau_n)\to [0,\infty],\,a\mapsto \begin{cases}\norm{\partial_n(x)}^2&\text{if }a=\Lambda_{\tau_n}(x),\,x\in \mathcal A_n,\\\infty&\text{otherwise}\end{cases}
\end{equation*}
is a completely Dirichlet form.

Let $H_n=\overline{\Lambda_{\tilde\phi}(\n_{\tilde\phi}\cap\n_{\tilde\phi}^\ast\cap M_n)}$ and let $\E_n$ be the closure of the quadratic form
\begin{equation*}
H_n\to [0,\infty],\,a\mapsto\begin{cases}\norm{\tilde\delta(a)}^2&\text{if }a\in \tilde\AA,\\\infty&\text{otherwise}.\end{cases}
\end{equation*}
In other words, $\E_n=Q_n\circ \iota_n^{-1}$.

Note that $\iota_n$ maps $\{\Lambda_{\tau_n}(x)\mid x\in \n_{\tilde\phi}\cap\n_{\tilde\phi}^\ast\cap M_n, 0\leq x\leq 1\}$ onto $\{\Lambda_{\tilde\phi}(x e^{-a_n/2})\mid x\in  \n_{\tilde\phi}\cap\n_{\tilde\phi}^\ast\cap M_n, 0\leq x\leq 1\}$. Since $\phi_n$ is a trace on $M_n$, the latter set coincides with $\{\Lambda_{\phi_n}(x)\mid x\in \AA_{\phi_n}\cap H_n,\,0\leq x\leq 1\}$. It follows that $\E_n$ is a completely Dirichlet form with respect to $\AA_{\phi_n}\cap H_n$.

Moreover,
\begin{align*}
\E_n(\Delta_{\phi_n}^{it} a)&=\E_n(e^{-i a_n/2 t}(\Delta_{\tilde\phi}^{it} a)e^{ia_n/2 t})\\
&=\norm{e^{-ia_n/2 t}(\tilde\U_t\tilde\delta(a))e^{ia_n/2 t}}^2\\
&=\norm{\tilde\delta(a)}^2\\
&=\E_n(a)
\end{align*}
for $a\in \tilde\AA$. This can easily be extended to the closure so that $\E_n$ is a modular completely Dirichlet form.

By Lemma \ref{lem:completion_crossed_prod} we have $e^{-a_n/2}\dom(\E_n)\subset \dom(\E_n)$ and $\E_n(e^{a_n/2}a,b)=\E_n(a,e^{-a_n/2}b)$ for $a,b\in \dom(\E_n)$. Furthermore, $e^{-a_n/2}\in \tilde M_{\tilde\phi}$. Hence $\E_n$ is also a modular completely Dirichlet form with respect to $\AA_{\tilde\phi}\cap H_n=\Lambda_{\tilde\phi}(\n_{\tilde\phi}\cap \n_{\tilde\phi}^\ast\cap M_n)$ by Lemma \ref{lem:change_weight}. 

Let $P_n$ denote the orthogonal projection from $\ell^2(G;\HH)$ onto $H_n$. By definition, $\tilde\E|_{H_n}=\E_n$. To apply Lemma \ref{lem:Dirichlet_approx}, we have to check that $\E\circ P_n\leq \E$.

Let $(T_t)$ be the strongly continuous semigroup associated with $\E$. As discussed above, $(\id_{\ell^2(G)}\otimes T_t)$ is the strongly continuous semigroup associated with $\tilde \E$. The modular group of $\phi_n$ is given by $\Delta_{\phi_n}^{it}=e^{-ita_n}(\id_{\ell^2(G)}\otimes \Delta_\phi^{it})(\cdot)e^{ita_n}$. Since $(T_t)$ commutes with $(\Delta_\phi^{it})$ and $e^{a_n}\in L(G)\otimes \IC 1_{\HH}$, the semigroup $(\id_{\ell^2(G)}\otimes T_t)$ commutes with $(\Delta_{\phi_n}^{it})$.

Since $M_n$ is the centralizer of $\phi_n$, the subspace $H_n$ is the fixed-point set of $(\Delta_{\tilde\phi_n}^{it})$. In particular, $(\id_{\ell^2(G)}\otimes T_t)(H_n)\subset H_n$. From Ouhabaz's theorem \cite[Corollary 2.4]{Ouh96} we deduce $\E\circ P_n\leq \E$.

Now Lemma \ref{lem:Dirichlet_approx} shows that $\E$ is a modular completely Dirichlet form with respect to $\AA_{\tilde\phi}$.

If $\AA$ is unital with unit $1_{\AA}$, then the left and right action of $\AA$ on $\H$ are unital since they are non-degenerate by definition. Thus
\begin{equation*}
\delta(1_\AA)=1_\AA\cdot\delta(1_\AA)+\delta(1_\AA)\cdot 1_\AA-\delta(1_\AA)=0
\end{equation*}
and hence $\E(1_\AA)=0$. Thus $T_t(1_\AA)=0$, which implies that $\E$ is a quantum Dirichlet form.
\end{proof}

In the situation of the previous theorem, we call $\E$ the \emph{completely Dirichlet form associated with $\delta$}.

\begin{remark}
If $\AA$ is not unital, the completely Dirichlet form associated with a derivation is not necessarily a quantum Dirichlet form, even in the commutative case. For example, this is the case for the standard Dirichlet energy $\E(f)=\int_\Omega \abs{\nabla f}^2$ with domain $H^1_0(\Omega)\cap L^\infty(\Omega)$ if $\Omega$ is a bounded Lipschitz domain.
\end{remark}

\begin{proposition}\label{prop:rel_deriv_diff_calc}
If $\AA$ is a unital Tomita algebra, $\H$ a normal Tomita bimodule over $\AA$ and $\delta\colon\AA\to\H$ a closable symmetric derivation with associated completely Dirichlet form $\E$, then the first-order differential calculus associated with $\E$ is a corestriction of $(\H^{\mathrm{a}},\bar\delta|_{\AA_\E})$. In particular,
\begin{equation*}
\bar\delta(ab)=a\bar\delta(b)+\bar\delta(a)b
\end{equation*}
for $a,b\in\AA_\E$.
\end{proposition}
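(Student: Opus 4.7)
By the standing assumptions of Section \ref{sec:proof_main}, $\AA = \widehat\AA^\delta$ and $\H = \H^{\mathrm{a}}$, so the product rule for $\bar\delta$ already holds on $\AA$ by Lemma \ref{lem:completion_deriv}. The plan is to realize the first-order differential structure $(\H_\E, \delta_\E)$ from \cite[Theorems 6.8, 6.9]{Wir22b} as the Tomita subbimodule of $\H^{\mathrm{a}}$ generated by $\bar\delta(\AA_\E)\cdot\AA_\E$, via an isometric Tomita-bimodule-preserving map sending $\delta_\E(a)\mapsto\bar\delta(a)$ for $a\in\AA_\E$. By the uniqueness clause of \cite[Theorem 6.9]{Wir22b} this reduces to verifying that $\bar\delta|_{\AA_\E}\colon \AA_\E\to\H^{\mathrm{a}}$ is a symmetric derivation inducing $\E$.

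The first key point is $\bar\delta(\AA_\E)\subset\H^{\mathrm{a}}$. Modular symmetry of $\E$ implies strong commutation of $\Delta$ with the positive self-adjoint generator $A$ of $\E$ on $\HH$ (where $\dom(A^{1/2})=\dom(\bar\delta)$ and $\E(a)=\norm{A^{1/2}a}^2$). The polar decomposition $\bar\delta = U A^{1/2}$ combined with the intertwining $\bar\delta\,\Delta^{it}=\bar\U_t\bar\delta$ on $\dom(\bar\delta)$ shows that $U$ intertwines $(\Delta^{it})$ with $(\bar\U_t)$, so the joint spectral calculus of the commuting pair $(A,\Delta)$ on $\HH$ transports through $U$ to $\bar\H$. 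For $a\in\AA_\E$ the hypothesis $\Delta^{iz}a\in\dom(\bar\delta)$ for every $z\in\IC$ then forces $\bar\delta(a)\in\dom(\bar\U_{iz})$ for every $z$, hence $\bar\delta(a)\in\H^{\mathrm{a}}$ with entire extension $z\mapsto\bar\U_z\bar\delta(a)=\bar\delta(U_z a)$.

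To identify $(\H_\E,\delta_\E)$ with its image inside $\H^{\mathrm{a}}$, define $\Phi\colon \mathrm{lin}\{\delta_\E(a)b:a,b\in\AA\}\to\H^{\mathrm{a}}$ by $\Phi(\delta_\E(a)b)=\bar\delta(a)b$. The identity $\langle\delta_\E(a)b,\delta_\E(a')b'\rangle_{\H_\E}=\langle\bar\delta(a)b,\bar\delta(a')b'\rangle_{\bar\H}$ for $a,a',b,b'\in\AA$ is established by iteratively applying the product rule (valid on $\AA$ for both $\bar\delta$ and $\delta_\E$) together with the bimodule identity $\langle\xi b,\eta\rangle=\langle\xi,\eta b^\flat\rangle$, reducing both sides to the same polynomial in the scalars $\E(c,c')$ for $c,c'\in\AA$. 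Using that $\AA$ is a core for $\E$ and the Tomita bound $\norm{\xi b}\leq\norm{\pi_r(b)}\norm{\xi}$ for $b\in\AA_\E$, this defining subspace is dense in $\H_\E$, so $\Phi$ extends by continuity to a Tomita-bimodule isomorphism onto the subbimodule of $\H^{\mathrm{a}}$ generated by $\bar\delta(\AA_\E)\cdot\AA_\E$. Graph-norm approximation of $c\in\AA_\E$ by $c_n\in\AA$ together with closedness of $\bar\delta$ yields $\Phi(\delta_\E(c))=\bar\delta(c)$, and transporting the product rule for $\delta_\E$ through $\Phi$ gives $\bar\delta(ab)=a\bar\delta(b)+\bar\delta(a)b$ on $\AA_\E$.

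The principal technical obstacle is the bimodule-polarization argument establishing isometry of $\Phi$ together with density of its domain in $\H_\E$: both require careful induction on the bimodule structure of the inner product to control the non-tracial twists encoded by the Tomita symmetries ${}^\sharp$ and ${}^\flat$, since one cannot assume the product rule for $\bar\delta$ on $\AA_\E$ at this stage of the argument.
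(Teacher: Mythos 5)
Your overall strategy coincides with the paper's: build the comparison map $\delta_\E(a)b\mapsto\bar\delta(a)b$ on $\mathrm{lin}\{\delta_\E(a)b\mid a,b\in\AA\}$, extend it isometrically to a bimodule map, use that $\AA$ is a graph-norm core to get $\Phi(\delta_\E(c))=\bar\delta(c)$ for all $c\in\AA_\E$, transport the product rule through $\Phi$, verify symmetry and analyticity, and invoke the uniqueness of the first-order differential calculus from \cite[Theorem 6.9]{Wir22b}. Your spectral argument for $\bar\delta(\AA_\E)\subset\H^{\mathrm{a}}$ (strong commutation of $\Delta$ with the form generator, polar decomposition $\bar\delta=UA^{1/2}$, transport of the modular group through $U$) is a legitimate direct substitute for the paper's citation of \cite[Lemma 6.6]{Wir22b}. (Your opening remark that the product rule on $\AA$ needs $\AA=\widehat{\AA}^\delta$ is superfluous: $\delta$ is a derivation on $\AA$ by hypothesis.)

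The one step that does not go through as you describe it is the isometry and well-definedness of $\Phi$. You claim that iterating the product rule together with $\langle\xi b,\eta\rangle=\langle\xi,\eta b^\flat\rangle$ reduces $\langle\delta_\E(a)b,\delta_\E(a')b'\rangle$ to a universal polynomial in the values $\E(c,c')$. But each application of the product rule trades a module factor on one argument for one on the other: $\langle\delta(a),\delta(a')c\rangle=\E(a,a'c)-\langle (a')^\sharp\delta(a),\delta(c)\rangle$, then $\langle (a')^\sharp\delta(a),\delta(c)\rangle=\E((a')^\sharp a,c)-\langle\delta((a')^\sharp)a,\delta(c)\rangle$, and the last term again carries a right factor, so the naive recursion never terminates. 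Closing it requires an additional input (the $\J$-symmetry relating left- and right-multiplied terms, or the carr\'e-du-champ/semigroup representation of the inner product, which depends only on $\E$); this is exactly what \cite[Lemma 5.9]{Wir22b} provides, and it is what the paper cites for the well-definedness and isometry of its map $U$. As written, your ``bimodule-polarization'' step is a gap, and it is load-bearing: without it $\Phi$ is not even known to be well defined on linear combinations. Either cite that lemma or supply the missing symmetrization argument.
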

\begin{proof}
Since $\AA$ is unital, $\E$ is a modular quantum Dirichlet form by \ref{thm:main}. Let $(\H_\E,\delta_\E)$ be a first-order differential calculus associated with $\E$. By definition, $\AA\subset\AA_\E\subset\AA^{\prime\prime}$ and the graph norm of $\bar\delta$ coincides with the graph norm of $\delta_\E$ on $\AA_\E$. Thus $\pi_l(\AA)^{\prime\prime}$ is strong$^\ast$ dense in $\pi_l(\AA_\E)^{\prime\prime}$ and $\AA$ is a core for $\delta_\E$. It follows that the linear hull of $\{\delta_\E(a)b\mid a,b\in\AA\}$ is dense in $\H_\E$.

Let
\begin{equation*}
U\colon\mathrm{lin}\{\delta_\E(a)b\mid a,b\in\AA\}\to\H,\,U(\delta_\E(a)b)=\delta(a)b.
\end{equation*}
By \cite[Lemma 5.9]{Wir22b} the map $U$ is well-defined and extends to an isometric $\AA_\E$-bimodule map from $\bar\H_\E$ to $\bar\H$ such that $U(\delta_\E(a))=\delta(a)$ for $a\in\AA$. 

If $a\in\AA_\E$, let $(a_n)$ be a sequence in $\AA$ such that $\norm{a_n-a}_{\bar\delta}\to 0$. As discussed above, this implies $\delta_\E(a_n)\to \delta_\E(a)$. Hence $U(\delta_\E(a))=\bar\delta(a)$. If $a,b\in\AA_\E$, then
\begin{align*}
\bar\delta(ab)&=U(\delta_\E(ab))\\
&=U(a\delta_\E(b)+\delta_\E(a)b)\\
&=a U(\delta_\E(b))+U(\delta_\E(a))b\\
&=a\bar\delta(b)+\bar\delta(a)b.
\end{align*}
Moreover, $\delta J=\J\delta$ can be extended by continuity to $\bar\delta J=\J\bar\delta$, and
\begin{equation*}
\IC\to \bar\H,\,z\mapsto \bar\delta(U_z a)
\end{equation*}
is an entire continuation of $t\mapsto \U_t \bar\delta(a)$ for $a\in\AA_\E$ by \cite[Lemma 6.6]{Wir22b}.

Thus $\bar\delta(\AA_\E)\subset\H^{\mathrm{a}}$ and $\bar\delta$ is a symmetric derivation on $\AA_\E$. The statement now follows from the uniqueness of the first-order differential calculus associated with a modular completely Dirichlet form \cite[Theorem 6.9]{Wir22b}.
\end{proof}

\begin{remark}
The previous result holds more generally with the same proof if $\AA$ is not necessarily unital, but the completely Dirichlet form associated with $\delta$ is still a quantum Dirichlet form.
\end{remark}

\begin{remark}
In the light of Lemma \ref{lem:diff_calc_delta_complete}, one has $\widehat{\AA}^\delta\subset \AA_\E$ in the situation of the previous proposition. It is an interesting question if one always has equality or if different derivations with $\delta$-complete domains can have the same associated completely Dirichlet form.
\end{remark}

\section{Examples}\label{sec:examples}

In this section we present several classes of derivations that give rise to modular completely Dirichlet forms according to Theorem \ref{thm:main}. The first three classes of examples concern inner derivations, before we treat derivations arising in non-tracial free probability in Example \ref{ex:deriv_free_prob} and derivations induced by cocycles on locally compact groups in Example~\ref{ex:deriv_cocycle}.

\begin{example}
Let $\AA$ be a Tomita algebra, $\H$ a normal Tomita bimodule over $\AA$ and $\xi\in \H$ be a bounded vector. Assume that there exists $\omega\in \IR$ such that $\U_t \xi=e^{i\omega t}\xi$ for all $t\in \IR$.
The map 
\begin{equation*}
\delta\colon \AA\to\H\oplus \H,\,a\mapsto i(\xi a-a\xi,(\J\xi)a-a(\J\xi))
\end{equation*}
is a bounded derivation, and it is symmetric when $\H\oplus\H$ is endowed with the involution $(\eta,\zeta)\mapsto (\J\zeta,\J\eta)$ and the complex one-parameter group $(e^{-i\omega z}\U_z,e^{i\omega z}\U_z)$.

It follows that the closure of the quadratic form
\begin{equation*}
\AA\to [0,\infty),\,a\mapsto \norm{\xi a-a\xi}_\H^2+\norm{(\J\xi)a-a(\J\xi)}_\H^2
\end{equation*}
is a (bounded) modular completely Dirichlet form with respect to $\AA^{\prime\prime}$. In the case $\H=\AA$, this was first proven by Cipriani \cite[Proposition 5.3]{Cip97}. See also \cite[Proposition 5.5]{Wir22b} for arbitrary Tomita bimodules $\H$ over $\AA$.
\end{example}

The next example is a (partial) extension of the previous example allowing for vectors implementing the inner derivation that are not necessarily bounded.

\begin{example}
Let $\AA$ be a Tomita algebra with Hilbert completion $\HH$. For $\xi\in \dom(\Delta^{1/2})$ the operator
\begin{equation*}
\pi_l^0(\xi)\colon \AA\to \HH,\,a\mapsto \xi a
\end{equation*}
is closable since $\pi_l^0(\xi^\sharp)\subset \pi_l^0(\xi)^\ast$. Likewise, if $\xi\in\dom(\Delta^{-1/2})$, then
\begin{equation*}
\pi_r^0(\xi)\colon \AA\to \HH,\,a\mapsto a\xi
\end{equation*}
is closable with $\pi_r^0(\xi^\flat)\subset \pi_r^0(\xi)^\ast$. Hence if $\xi\in \dom(\Delta^{-1/2})\cap \dom(\Delta^{1/2})$, then $\pi_l^0(\xi)-\pi_r^0(\xi)$ is closable with $\pi_l^0(\xi^\sharp)-\pi_r^0(\xi^\flat)\subset (\pi_l^0(\xi)-\pi_r^0(\xi))^\ast$.

Now assume that there exists $\omega\in\IR$ such that $\Delta^{it}\xi=e^{i\omega t}\xi$ for all $t\in\IR$. This implies in particular $\xi\in\dom(\Delta^{-1/2})\cap\dom(\Delta^{1/2})$.

Similar to the last example, one can turn $\AA\oplus\AA$ into a Tomita bimodule over $\AA$ if one equips it with the usual bimodule structure, the involution $(\eta,\zeta)\mapsto (J\zeta,J\eta)$ and the complex one-parameter group $(e^{-i\omega z}U_z,e^{i\omega z}U_z)$.Then the map
\begin{equation*}
\delta\colon \AA\to\AA\oplus \AA,\,a\mapsto i(\xi a-a\xi,(J\xi)a-a(J\xi))
\end{equation*}
is a closable symmetric derivation.

Thus the closure of the quadratic form
\begin{equation*}
\AA\to [0,\infty),\,a\mapsto \norm{\xi a-a\xi}_\HH^2+\norm{(J\xi)a-a(J\xi)}_\HH^2
\end{equation*}
is a modular completely Dirichlet form with respect to $\AA^{\prime\prime}$. This result has first been obtained by Cipriani and Zegarlinski \cite[Theorem 2.5]{CZ21}.
\end{example}

The previous examples require eigenvectors of the modular group to construct a symmetric derivation, which may be hard to find. In the following examples we show that in certain situations one can start with an arbitrary element if one ``averages'' the action of the modular group to ensure modularity.
\begin{example}
Let $M$ be a von Neumann algebra with separable predual. A normal semi-finite weight faithful weight $\phi$ on $M$ is called \emph{integrable} \cite[Definition II.2.1]{CT77} if
\begin{equation*}
\q_\phi=\left\{x\in M: \int_\IR \sigma^\phi_t(x^\ast x)\,dt\text{ exists in the $\sigma$-strong topology}\right\}
\end{equation*}
is weak$^\ast$ dense in $M$.

If $\phi$ is integrable, the set
\begin{equation*}
\AA=\{\Lambda_\phi(x)\mid x\in M\text{ analytic for }\sigma^\phi, \sigma^\phi_z(x)\in \q_\phi\cap \q_\phi^\ast\cap \n_\phi\cap \n_\phi^\ast\text{ for all }z\in\IC\}
\end{equation*}
is a Tomita subalgebra of $(\AA_\phi)_0$ with Hilbert completion $L^2(M)$ and $\pi_l(\AA)^{\prime\prime}=M$, as can be seen from \cite[Lemma II.2.3]{CT77} together with a standard mollifying argument.

Let $(V_t)$ be the translation group on $L^2(\IR)$, that is, $V_t f(s)=f(s+t)$, and let $L^2(\IR)^{\mathrm{a}}$ be the set of all entire analytic elements for $(V_t)$. Endow $L^2(\IR)^{\mathrm{a}}\odot \AA$ with the left and right action of $\AA$ given by $a(f\otimes b)c=f\otimes abc$, the complex one-parameter group $(V_z\odot U_z)_{z\in\IC}$ and the involution $f\otimes a\mapsto \bar f\otimes Ja$. It can be checked that this makes $L^2(\IR)^{\mathrm{a}}\odot \AA$ into a normal Tomita bimodule, which we denote by $\H$.

Let $a\in \dom(\Delta_\phi^{1/2})\cap\dom(\Delta_\phi^{-1/2})$ with $Ja=a$ and define
\begin{equation*}
\delta\colon \AA\to L^2(\IR;L^2(M)),\,\delta(b)(s)=(U_{-s}a)b-b(U_{-s}a).
\end{equation*}
We have
\begin{align*}
\delta(U_t b)(s)&=(U_{-s}a)(U_t b)-(U_t b)(U_{-s}a)\\
&=U_t((U_{-(s+t)}a)b-b(U_{-(s+t)}a))\\
&=U_t \delta(b)(s+t).
\end{align*}
Thus $\delta\circ U_t=(U_t\otimes V_t)\circ\delta$. In particular, $\delta$ maps into $\H^{\mathrm{a}}$.

It is not hard to check that $\delta\colon \AA\to \H^{\mathrm{a}}$ is a symmetric derivation. To show closability, first note that for every fixed $s\in \IR$ the map $b\mapsto\delta(b)(s)$ is closable as seen in the previous example. If $b_n\to 0$ and $\delta(b_n)\to \xi$, then there exists a subsequence such that $\delta(b_{n_k})(s)\to \xi(s)$ for a.e. $s\in \IR$. Closability of the map $b\mapsto \delta(b)(s)$ implies $\xi(s)=0$ for a.e. $s\in \IR$, which proves the closability of $\delta$.

Thus the closure of the quadratic form
\begin{equation*}
\AA\to [0,\infty),\,b\mapsto \int_\IR \norm{(U_{-s}a)b-b(U_{-s}a)}^2\,ds
\end{equation*}
is a modular completely Dirichlet form.

If we drop the assumption $Ja=a$, a similar argument shows that the closure of the quadratic form
\begin{equation*}
\AA\to [0,\infty),\,b\mapsto\int_\IR(\norm{(U_{-s}a)b-b(U_{-s}a)}^2+\norm{(U_{-s}Ja)b+b(U_{-s}Ja)}^2)\,ds
\end{equation*}
is a modular completely Dirichlet form with respect to $\AA^{\prime\prime}$.

A similar construction is possible if one starts with a weight with periodic modular group instead of an integrable weight and integrates over a period of the modular group.
\end{example}

The following class of examples of derivations was introduced by Nelson \cite{Nel17} in the context of non-tracial free probability.
\begin{example}\label{ex:deriv_free_prob}
Let $M$ be a von Neumann algebra, $\phi$ a normal faithful state on $M$ and $B\subset M$ a $\ast$-subalgebra. Let $\partial\colon B\to M\overline{\otimes} M$ be a linear map such that
\begin{equation*}
\partial(xy)=(x\otimes 1)\cdot\partial(y)+\partial(x)\cdot (1\otimes y)
\end{equation*}
for $x,y\in B$. Note that Nelson works with $M\overline{\otimes} M^\op$ instead, but under the identification $x\otimes y\mapsto x\otimes y^\op$, the $M$-bimodules $M\overline{\otimes} M$ and $M\overline{\otimes} M^\op$ (with the bimodule structure used in \cite{Nel17}) are isomorphic.

Let $\omega\in\IR$ and write $M_\infty$ for the set of entire analytic elements for $\sigma^\phi$. Nelson \cite[Definition 3.2]{Nel17} calls the map $\partial$ an $e^\omega$-modular derivation if $B\subset M_\infty$, $B$ is invariant under $\sigma^\phi_z$ for all $z\in\IC$, $\partial(B)\subset M_\infty\odot M_\infty$ and
\begin{equation*}
\partial(\sigma^\phi_z(x))=e^{i\omega z}(\sigma^\phi_z\otimes\sigma^{\phi}_z)(\partial(x))
\end{equation*}
for all $x\in B$ and $z\in \IC$.

One example given by Nelson is the free difference quotient from free probability (see \cite[Definition 3.4]{Nel17} in the non-tracial case). Given a $\ast$-subalgebra $B$ of $M$ and an element $a\in M$ that is algebraically free from $B$ (and $a^\ast$ is algebraically free from $a$ if $a\neq a^\ast)$, let
\begin{equation*}
\partial_a\colon B[a]\to B[a]\odot B[a],\,\partial_a(a)=1\otimes 1,\,\delta\vert_B=0
\end{equation*}
(and $\delta_a(a^\ast)=0$ if $a^\ast\neq a$). If $a$ is an eigenvector of $\Delta_\phi$ to the eigenvalue $e^\omega$, then $\partial_a$ is an $e^\omega$-modular derivation.

Let us see how an $e^\omega$ derivation gives rise to a symmetric derivation in our sense. For $x,y\in M$ let $(x\otimes y)^\dagger=y^\ast\otimes x^\ast$. The conjugate derivation of $\partial$ is the map 
\begin{equation*}
\hat\partial\colon B\to M_\infty\odot M_\infty,\,\hat\partial(x)=\partial(x^\ast)^\dagger.
\end{equation*}

Let $\AA=\Lambda_\phi(B)$. Since $B$ is consists of the analytic elements for $\sigma^\phi$ and is invariant under $\sigma^\phi_z$ for $z\in \IC$, the set $\AA$ is a Tomita subalgebra of $(\AA_\phi)_0=\Lambda_\phi(M_\infty)$.

Let $\H=(\Lambda_\phi(M_\infty)\odot\Lambda_\phi(M_\infty))^{\oplus 2}$ with left and right action of $\AA$ given by
\begin{equation*}
a(\xi_1\otimes\eta_1,\xi_2\otimes \eta_2)b=(a\xi_1 \otimes\eta_1 b,a\xi_2 \otimes \eta_2 b),
\end{equation*}
 involution $\J$ given by $(\xi_1\otimes\eta_1,\xi_2\otimes\eta_2)\mapsto (J\eta_2\otimes J\xi_2,J\eta_1\otimes J\xi_1)$, and complex one-parameter group $(\U_z)=(e^{i\omega z}\Delta_{\phi\otimes\phi}^{iz},e^{-i\omega z}\Delta_{\phi\otimes\phi}^{iz})$. One can check that this makes $\H$ into a normal Tomita bimodule over $\AA$.
 
Let 
\begin{equation*}
\delta\colon \AA\to\H,\,\delta(\Lambda_\phi(x))=(\Lambda_{\phi\otimes\phi}(\partial(x)),\Lambda_{\phi\otimes \phi}(\partial^\dagger(x))).
\end{equation*}
The product rule for $\partial$ and $\hat\partial$ translate to the product rule for $\delta$, the $e^\omega$ modularity of $\partial$ ensures $\delta\circ \Delta_\phi^{iz}=\U_z \circ\delta$ and the definition of $\hat\partial$ and $\J$ are tailored to guarantee $\delta\circ J_\phi=\J\circ\delta$.

All of these properties follow by routine calculations, let us just show the product rule (for the first component of) $\delta$ as illustration. Let $\delta_1(\Lambda_\phi(x))=\Lambda_{\phi\otimes\phi}(\partial(x))$. By the product rule for $\partial$ we have
\begin{align*}
\delta_1(\Lambda_\phi(xy))&=\Lambda_{\phi\otimes\phi}((x\otimes 1)\partial(y)+\partial(x)(1\otimes y))\\
&=\Lambda_{\phi\otimes \phi}((x\otimes 1)\Lambda_{\phi\otimes \phi}(\partial(y))+\partial(x))(1\otimes \sigma^\phi_{-i/2}(y))\\
&=(\pi_l(\Lambda_\phi(x))\otimes 1)\delta_1(\Lambda_\phi(y))+(1\otimes \pi_r(\Lambda_\phi(y)))\delta_1(\Lambda_\phi(x)).
\end{align*}

Thus, if $\delta$ is closable, the closure of the associated quadratic form is a completely Dirichlet form with respect to $\AA_\phi$ on the GNS Hilbert space $L^2(M,\phi)$.

To compare that to the result of Nelson, he showed \cite[Proposition 4.4]{Nel17} that one gets a completely Dirichlet form on the GNS Hilbert space $L^2(M_\phi,\phi)$ of the \emph{centralizer} $M_\phi$ of $\phi$, which is of course a tracial von Neumann algebra.

Our methods allow to extend this result to the ``fully'' non-tracial setting in that we obtain a modular completely Dirichlet form on the GNS Hilbert space of $M$ on not just of the centralizer. Note however that Nelson's definition of the map $\delta$ between $L^2$ spaces seems slightly different, owing to the use of $M\overline{\otimes}M^\op$ instead of $M\overline{\otimes}M$.
\end{example}

The last example concerns group von Neumann algebras. The case of discrete groups was treated in \cite[Section 10.2]{CS03}, but to cover general locally compact groups, possibly non-unimodular, one needs the theory for non-tracial reference weights as developed here.

\begin{example}\label{ex:deriv_cocycle}
Let $G$ be a locally compact group with left Haar measure $\mu$ and modular function $\Delta_G$. As discussed in \cite[Proposition VII.3.1]{Tak03}, the space $C_c(G)$ of compactly supported continuous function on $G$ with the $L^2$ inner product, the convolution product, the involution $f^\sharp(g)=\Delta_G(g)^{-1}\overline{f(g^{-1})}$ and the complex one-parameter group $U_z f(g)=\Delta_G(g)^{iz}f(g)$ forms a Tomita algebra. We write $\lambda$ and $\rho$ for the associated left and right action of $C_c(G)$ on $L^2(G)$ and $\AA_G$ for the associated full left Hilbert algebra.

Let $\pi$ be a strongly continuous orthogonal representation of $G$ on the real Hilbert space $H$. A continuous map $b\colon G\to H$ is called $1$-cocycle if $b(gh)=b(g)+\pi(g)b(h)$ for all $g,h\in G$. We extend $\pi$ to a unitary representation of $G$ on the complexification $H^\IC$ of $H$ and write $\xi\mapsto\bar \xi$ for the anti-unitary involution induced by $H\subset H^\IC$.

On $C_c(G;H^\IC)$ define a left and right action of $C_c(G)$ by
\begin{align*}
(f\ast \xi)(g)&=\int_G f(h)\pi(h)\xi(h^{-1}g)\,d\mu(h)\\
(\xi\ast f)(g)&=\int_G f(h^{-1}g)\xi(h)\,d\mu(h),
\end{align*}
an anti-unitary involution by $(\J\xi)(g)=-\Delta_G(g)^{-1/2}\pi(g)\overline{\xi(g^{-1})}$ and a complex one-parameter group by $\U_z \xi(g)=\Delta_G(g)^{iz}\xi(g)$. One can check that $C_c(G;H^\IC)$ with this operations is a Tomita bimodule over $C_c(G)$.

Let
\begin{equation*}
\delta\colon C_c(G)\to C_c(G;H^\IC),\,\delta(f)(g)=f(g)b(g).
\end{equation*}
Using the cocycle property of $b$, one gets
\begin{align*}
\delta(f_1\ast f_2)(g)&=\int_G f_1(h)f_2(h^{-1}g)\,d\mu(h)\,b(g)\\
&=\int_G f_1(h)f_2(h^{-1}g)(\pi(h)b(h^{-1}g)+b(h))\,d\mu(h)\\
&=(f_1\ast \delta(f_2))(g)+(\delta(f_1)\ast f_2)(g).
\end{align*}
It is readily verified that $\delta$ also satisfies $\delta\circ J=\J\circ\delta$ and $\delta\circ U_z=\U_z\circ \delta$ for all $z\in \IC$. Hence $\delta$ is a symmetric derivation. As a multiplication operator, it is clearly closable.

Therefore,
\begin{equation*}
\E\colon L^2(G,\mu)\to [0,\infty],\,\E(f)=\int_G \abs{f(g)}^2\norm{b(g)}^2\,d\mu(g)
\end{equation*}
is a modular completely Dirichlet form with respect to $\AA_G$. The associated quantum dynamical semigroup on $L(G)$ is given by
\begin{equation*}
P_t\left(\int_G \hat x(g)\lambda(g)\,d\mu(g)\right)=\int_G e^{-t\norm{b(g)}^2}\hat x(g)\lambda(g)\,d\mu(g).
\end{equation*}
In this case, complete positivity of $P_t$ also follows directly from Schönberg's theorem as $g\mapsto \norm{b(g)}^2$ is a conditionally negative definite function on $G$.

\end{example}

\DeclareFieldFormat[article]{citetitle}{#1}
\DeclareFieldFormat[article]{title}{#1}

\printbibliography
\end{document}